\newtheorem{theorem}{Theorem}           
\newtheorem{lemma}[theorem]{Lemma} 
\newtheorem{corollary}[theorem]{Corollary}
\newtheorem{claim}[theorem]{Claim}
\theoremstyle{definition}
\newtheorem{definition}[theorem]{Definition}
\newtheorem{example}[theorem]{Example}
\newtheorem{remark}[theorem]{Remark}
\renewcommand{\Re}{\operatorname{Re}}
\renewcommand{\Im}{\operatorname{Im}}
\newcommand{\func}{\mathrm}
\DeclareMathOperator\dist{dist}
\email{anna.muranova@gmail.com}
\thanks{This research was supported by IRTG 2235 Bielefeld-Seoul ``Searching for the regular in the irregular:
Analysis of singular and random systems''.\\
\textit{Keywords}: weighted graphs, Laplace operator, Kirchhoff's equations, electrical network, effective impedance, ladder network.\\
\textit{Mathematics Subject Classification 2010:}{\ 34B45,
39A12, 05C22}}
\begin{document}

\author{Anna Muranova}
\address{IRTG 2235, University Bielefeld, Postfach 100131, 33501 Bielefeld,
Germany}
\title{On the effective impedance of finite and infinite networks}
\maketitle

\begin{abstract}
In this paper we deal with the notion of the effective impedance of AC
networks consisting of resistances, coils and capacitors. Mathematically
such a network is a locally finite graph whose edges are endowed with
complex-valued weights depending on a complex parameter $\lambda $ (by the
physical meaning, $\lambda =i\omega $, where $\omega $ is the frequency of
the AC). For finite networks, we prove some estimates of the effective
impedance. Using these estimates, we show that, for infinite networks, the
sequence of impedances of finite graph approximations converges in certain
domains in $\mathbb{C}$ to a holomorphic function of $\lambda$, which allows us to define
the effective impedance of the infinite network.
\end{abstract}

\tableofcontents

\section{Introduction}

Mathematically an electrical network can be represented by a connected graph
whose edges are endowed by weights that are determined by the physical
properties of the connection between two nodes. Here we deal with the
networks consisting of resistors, coils and
capacitors. Assuming that the network is connected to a source of AC with
the frequency $\omega $, each edge $xy$ between the nodes $x,y$ receives
the complex-valued weight (impedance) 
\begin{equation*}
z_{xy}^{\left( \lambda \right)
}=L_{xy}\lambda +R_{xy}+\frac{1}{C_{xy}\lambda },
\end{equation*}
 where $R_{xy}$ is the
resistance of this edge, $L_{xy}$ is the inductance, $C_{xy}$ is the
capacitance, and $\lambda =i\omega .$ The goal is to determine the effective
impedance of the entire network.

If the network is finite then the problem amounts to  a linear system
of Kirchhoff's equations. In absence of coils and capacitors this system has
always non-zero determinant, which implies that the effective impedance is
well-defined (and, of course, is independent of $\lambda $). Note that a
network that consists only of resistances determines naturally a reversible
Markov chain (see e.g. \cite{DS}, \cite{Grimmett}, \cite{LPW}, \cite{Soardi}%
).

For infinite (but locally finite) graphs, again in absence of coils and
capacitors, one constructs first a sequence $\left\{ Z_{n}\right\} $ of 
\emph{partial impedances }that are the effective impedances of an exhaustive
sequence of finite graphs, and then defines the effective impedance $Z$ of
the entire network as the limit $\lim Z_{n}$. This limit always exists due
to the monotonicity of the sequence $\left\{ Z_{n}\right\} $ (cf. \cite{DS}, 
\cite{Grigoryan}, \cite{Grimmett}, \cite{LPW}, \cite{Soardi}, \cite{Woess})

Although the notion of the effective impedance is widely used in physical
and mathematical literature, the problem of justification of this notion in
the presence of coils and capacitors is not satisfactorily solved. In the
case of finite graphs, the determinant of the system may vanish for some
values of $\lambda $ and the system may have infinitely many solutions or no
solutions. Hence, the definition of the effective impedance in this case
requires substantial work that was done in the previous paper of the author 
\cite{Muranova}. In Section \ref{SecFinite} we present an extended version
of these results for finite graphs.

The case of infinite graphs is even more complicated because the sequence $%
\left\{ Z_{n}\right\} $ is complex valued, depends on $\lambda $, and no
monotonicity argument is available.

One of the first examples of computation of effective impedance for an
infinite network was done by Richard Feynman \cite{Feynman}. As it
was observed later (cf. \cite{Enk}, \cite{Klimo}, \cite{UA}, \cite{UY}, \cite{Yoon}), the
sequence $\left\{ Z_{n}\right\} $ of partial effective impedances in this network
(named Feynman's\emph{\ ladder}) converges not for all values of the
frequency $\omega ,$ which raises the question about the validity of
Feynman's computation as well as the problem about a careful mathematical
definition of the effective impedance for infinite networks.

In this paper we make the first attempt to solve this problem. We work with
admittances $\rho _{xy}^{\left( \lambda \right) }=\frac{1}{z_{xy}^{\left(
\lambda \right) }}$ regarded as functions of $\lambda \in \mathbb{C}$
(similarly to \cite{Brune}), and investigate the problem of convergence of
the sequence $\left\{ \mathcal{P}_{n}\left( \lambda \right) \right\} $ of
the partial effective admittances. Our main result, Theorem \ref{convergenceAtInfinity}%
, says that $\mathcal{P}\left( \lambda \right) :=\lim \mathcal{P}_{n}\left(
\lambda \right) $ exists and is a holomorphic function of $\lambda $ in the
domain $\left\{ \Re\lambda >0\right\} $ as well as in some other
regions. In the case of a resistance free network, Corollary \ref{corinf}
says that $\mathcal{P}\left( \lambda \right) $ is holomorphic in $\mathbb{C}%
\setminus \left[ -i\sqrt{S},i\sqrt{S}\right] $ where 
\begin{equation*}
S=\sup_{xy}\frac{1}{C_{xy}L_{xy}}.
\end{equation*}%
These results about infinite networks are proved in Section \ref{SecInfinite}%
. The proof is based on the estimates of admittances for finite networks
that are presented in Section \ref{SecEstimates}. In Section \ref{SecEx} we
give some examples, including a modified Feynman's ladder, that illustrate the
domain of convergence of the sequence $\mathcal{P}_{n}\left( \lambda \right) 
$.

\section{Effective impedance of finite networks}

\label{SecFinite}Let $(V,E)$ be a finite connected graph, where $V$ is the
set of vertices, $\left\vert V\right\vert \geq 2$, and $E$ is the set of
(unoriented) edges.

Assume that each edge $xy\in E$ is equipped with a resistance $R_{xy}$,
inductance $L_{xy}$, and capacitance $C_{xy}$, where $R_{xy},L_{xy}\in
\lbrack 0,+\infty )$ and $C_{xy}\in (0,+\infty ]$, which correspond to the
physical resistor, inductor (coil), and capacitor (see e.g. \cite{Feynman1}%
). It will be convenient to use the inverse capacity 
\begin{equation*}
D_{xy}=\frac{1}{C_{xy}}\in \lbrack 0,+\infty ).
\end{equation*}%
We always assume that for any edge $xy\in E$ 
\begin{equation*}
R_{xy}+L_{xy}+D_{xy}>0.
\end{equation*}%
The \emph{impedance} of the edge $xy$ is defined as the following function
of a complex parameter $\lambda $: 
\begin{equation*}
z_{xy}^{(\lambda )}=R_{xy}+L_{xy}\lambda +\frac{D_{xy}}{\lambda }.
\end{equation*}%
Although the impedance has a physical meaning only for $\lambda =i\omega $,
where $\omega $ is the frequency of the alternating current (that is, $%
\omega $ is a positive real number), it will be convenient to allow $\lambda 
$ to take arbitrary values in $\mathbb{C}\setminus \left\{ 0\right\} $ (cf. 
\cite{Brune}).

In fact, it will be more convenient to work with the \emph{admittance} $\rho
_{xy}^{(\lambda )}$:

\begin{equation}
\rho _{xy}^{(\lambda )}:=\frac{1}{z_{xy}^{(\lambda )}}=\frac{\lambda }{%
L_{xy}\lambda ^{2}+R_{xy}\lambda +{D_{xy}}}.  \label{rhol}
\end{equation}

Define the \emph{physical Laplacian} $\Delta _{\rho }$ as an operator on
functions $f:V\rightarrow \mathbb{C}$ as follows 
\begin{equation}
\Delta _{\rho }f(x)=\sum_{y\in V:y\sim x}(f(y)-f(x))\rho _{xy}^{(\lambda )},
\label{PhysLaplacian}
\end{equation}%
where $x\sim y$ means that $xy\in E$. For convenience let us extend $\rho
_{xy}^{\left( \lambda \right) }$ to all pairs $x,y\in V$ by setting $\rho
_{xy}^{\left( \lambda \right) }=0$ if $x\not\sim y.$ Then the summation in (%
\ref{PhysLaplacian}) can be extended to all $y\in V.$

Let us fix a vertex $a\in V$ and a non-empty subset $B\in V$ such that $%
a\not\in B$. Set $B_{0}=B\cup \{a\}$. The physical meaning of $a$ and $B$ is as follows: $a
$ is the source of AC with the unit voltage, while the set $B$ represents
the ground with zero voltage. We refer to the structure $\Gamma =(V,\rho ,a,B)$ as a
finite \emph{(electrical) network}.

By the complex Ohm's and Kirchhoff's laws, the complex voltage $v:V\rightarrow 
\mathbb{C}$ satisfies the following conditions: 
\begin{equation}
\begin{cases}
\Delta _{\rho }v(x)=0\text{\ \ for }x\in V\setminus B_{0}, \\ 
v(x)=0\text{\ \ for }x\in B, \\ 
v(a)=1.%
\end{cases}
\label{Dirpr}
\end{equation}%
We consider (\ref{Dirpr}) as a discrete boundary value \emph{Dirichlet
problem}.

Denote by $\Lambda $ the set of all those values of $\lambda $ for which $%
\rho _{xy}^{(\lambda )}\in \mathbb{C}\setminus \left\{ 0\right\} $ for all
edges $xy$. The complement $\mathbb{C}\setminus \Lambda $ consists of $%
\lambda =0$ and of all zeros of the equations 
\begin{equation*}
L_{xy}\lambda^{2}+R_{xy}\lambda +{D_{xy}}=0. 
\end{equation*}
In particular, $\mathbb{C}\setminus \Lambda 
$ is a finite set. Clearly, for every $\lambda \in \mathbb{C}\setminus
\Lambda $ we have $\Re\lambda \leq 0$ so that 
\begin{equation*}
\Lambda \supset \{\Re\lambda >0\}.
\end{equation*}%
Observe also that 
\begin{equation}
\Re\lambda >0\Rightarrow \Re z_{xy}^{\left( \lambda \right)
}>0\Rightarrow \Re\rho _{xy}^{(\lambda )}>0  \label{PositivityOfRe}
\end{equation}%
and, for $\lambda \in \Lambda $, 
\begin{equation}
\Re\lambda \geq 0\Rightarrow \Re z_{xy}^{\left( \lambda \right)
}\geq 0\Rightarrow \Re\rho _{xy}^{(\lambda )}\geq 0
\label{NonNegativityOfRe}
\end{equation}%
In what follows we consider the Dirichlet problem (\ref{Dirpr}) only for $%
\lambda \in \Lambda .$

If $v\left( x\right) $ is a solution of (\ref{Dirpr}) then the total current
through $a$ is equal to 
\begin{equation*}
\sum_{x\in V}(1-v(x))\rho _{xa}^{(\lambda )},
\end{equation*}%
which motivates the following definition (cf. \cite{Muranova}).

\begin{definition}
For any $\lambda \in \Lambda $, the \emph{effective admittance} of the
network $\Gamma $ is defined by 
\begin{equation}
\mathcal{P}{(\lambda )}=\sum_{x\in V}(1-v(x))\rho _{xa}^{(\lambda )},
\label{Pdef}
\end{equation}%
where $v$ is a solution of the Dirichlet problem (\ref{Dirpr}). The \emph{%
effective impedance} of $\Gamma $ is defined by 
\begin{equation*}
Z{(\lambda )}=\frac{1}{\mathcal{P}\left( \lambda \right) }=\frac{1}{%
\sum\limits_{x\in V}(1-v(x))\rho _{xa}^{(\lambda )}}.
\end{equation*}%
If the Dirichlet problem (\ref{Dirpr}) has no solution for some $\lambda $,
then we set $\mathcal{P}{(\lambda )}=\infty $ and $Z{(\lambda )}=0$.
\end{definition}

Note that $Z{(\lambda )}$ and $\mathcal{P}{(\lambda )}$ take values in $%
\overline{\mathbb{C}}=\mathbb{C}\cup \{\infty \}$. We will prove below that
in the case when (\ref{Dirpr}) has multiple solution, the values of $Z{%
(\lambda )}$ and $\mathcal{P}{(\lambda )}$ are independent of the choice of
the solution $v$. In the case when $B$ is a singleton, this was proved in 
\cite{Muranova}.

Observe immediately the following symmetry properties that will be used
later on.

\begin{lemma}
$\left( a\right) $ If $\lambda \in \Lambda $ then also $\overline{\lambda }%
\in \Lambda $ and 
\begin{equation}
\mathcal{P}(\overline{\lambda })=\overline{\mathcal{P}({\lambda })}.
\label{Pbar}
\end{equation}

$\left( b\right) $ Assume in addition that $R_{xy}=0$ for all $xy\in E$.
Then $\lambda \in \Lambda $ implies $-\lambda \in \Lambda $ and 
\begin{equation*}
\mathcal{P}(-{\lambda })=-{\mathcal{P}({\lambda })}.
\end{equation*}
\end{lemma}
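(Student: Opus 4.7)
The plan is to transfer the symmetries of the admittance $\rho^{(\lambda)}_{xy}$ evident from (\ref{rhol}) first to symmetries of the Dirichlet problem (\ref{Dirpr}), and then, via the defining formula (\ref{Pdef}), to the effective admittance.

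For part (a), since $L_{xy}$, $R_{xy}$, $D_{xy}$ are real, formula (\ref{rhol}) immediately gives $\rho^{(\overline{\lambda})}_{xy} = \overline{\rho^{(\lambda)}_{xy}}$. In particular $\rho^{(\overline{\lambda})}_{xy}$ is nonzero exactly when $\rho^{(\lambda)}_{xy}$ is, so $\overline{\lambda}\in \Lambda$. Next I would verify that a function $v\colon V\to \mathbb{C}$ solves (\ref{Dirpr}) for the parameter $\lambda$ if and only if $\overline{v}$ solves it for $\overline{\lambda}$: conjugating the equation $\sum_{y}(v(y)-v(x))\rho^{(\lambda)}_{xy}=0$ and using the symmetry above yields $\sum_{y}(\overline{v}(y)-\overline{v}(x))\rho^{(\overline{\lambda})}_{xy}=0$, while the boundary data $v(a)=1$, $v|_{B}=0$ are real and hence invariant under conjugation. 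Applying (\ref{Pdef}) to $\overline{v}$ and pulling the conjugation out of the sum gives (\ref{Pbar}). If (\ref{Dirpr}) has no solution for $\lambda$, the bijection $v\leftrightarrow \overline{v}$ between solution spaces shows that there is none for $\overline{\lambda}$ either, so both sides of (\ref{Pbar}) equal $\infty$ by convention.

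For part (b), the extra assumption $R_{xy}=0$ reduces (\ref{rhol}) to $\rho^{(\lambda)}_{xy}=\lambda/(L_{xy}\lambda^{2}+D_{xy})$, which is odd in $\lambda$; hence $\rho^{(-\lambda)}_{xy}=-\rho^{(\lambda)}_{xy}$. This immediately yields $-\lambda\in\Lambda$, and plugging into (\ref{PhysLaplacian}) shows that replacing $\lambda$ by $-\lambda$ merely reverses the sign of the operator $\Delta_{\rho}$. Consequently the solution set of the Dirichlet problem (\ref{Dirpr}) is unchanged, and evaluating (\ref{Pdef}) on the same $v$ with parameter $-\lambda$ produces $\mathcal{P}(-\lambda)=-\mathcal{P}(\lambda)$. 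The degenerate case with no solution is treated as in (a).

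The only subtle point is that at this stage of Section \ref{SecFinite} the independence of $\mathcal{P}(\lambda)$ from the choice of solution $v$ has only been announced, not yet proved. The argument above sidesteps this issue because in both parts we exhibit a bijection between the solution spaces at the paired parameters ($v\leftrightarrow \overline{v}$ for (a) and $v\leftrightarrow v$ for (b)); thus once a particular $v$ has been selected to define $\mathcal{P}(\lambda)$, the naturally matched solution on the other side provides a consistent value for $\mathcal{P}(\overline{\lambda})$ or $\mathcal{P}(-\lambda)$, so the identities hold regardless of how the potential ambiguity is eventually resolved.
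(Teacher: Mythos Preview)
Your proposal is correct and follows essentially the same approach as the paper: transfer the symmetry $\rho^{(\overline{\lambda})}=\overline{\rho^{(\lambda)}}$ (resp.\ $\rho^{(-\lambda)}=-\rho^{(\lambda)}$ when $R_{xy}=0$) to the solutions of the Dirichlet problem and then to $\mathcal{P}$ via (\ref{Pdef}). Your version is in fact more thorough than the paper's, since you explicitly treat the no-solution case $\mathcal{P}=\infty$ and discuss the potential ambiguity from non-uniqueness of $v$, whereas the paper leaves these points implicit.
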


\begin{proof}
$\left( a\right) $ If $\lambda $ is a root of the equation $L_{xy}\lambda
^{2}+R_{xy}\lambda +D_{xy}=0$ then $\overline{\lambda }$ is also a root,
whence the first claim follows. If $v$ is a solution of (\ref{Dirpr}) for
some $\lambda $ then clearly $\overline{v}$ is a solution of (\ref{Dirpr})
with the parameter $\overline{\lambda }$ instead of $\lambda $. Substituting
into (\ref{Pdef}) and using $\rho ^{\left( \overline{\lambda }\right) }=%
\overline{\rho ^{\left( \lambda \right) }}$, we obtain (\ref{Pbar}).

$\left( b\right) $ The proof is similar to $\left( a\right) $ observing that
if $\lambda $ is a root of $L_{xy}\lambda ^{2}+D_{xy}=0$ then $-\lambda $ is
also a root.
\end{proof}

The following Green's formula was proved in \cite{Muranova} (for simplicity of
notation, we skip the superscript in $\rho ^{\left( \lambda \right) }$ when $%
\lambda $ is fixed).

\begin{lemma}[Green's formula]
For any $\lambda \in \Lambda $ and for any two functions $f,g:V\rightarrow 
\mathbb{C}$ we have 
\begin{equation}
\frac{1}{2}\sum_{x,y\in V}(\nabla _{xy}f){(\nabla _{xy}g)}\rho
_{xy}=-\sum_{x\in V}\Delta _{\rho }f(x){g(x)}=-\sum_{x\in V}\Delta _{\rho
}g(x){f(x)},  \label{Green'sf}
\end{equation}%
where $\nabla _{xy}f=f(y)-f(x)$.
\end{lemma}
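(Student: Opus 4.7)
The plan is the standard symmetrization trick for discrete integration by parts, using the symmetry $\rho_{xy}=\rho_{yx}$ (which holds because the edges are unoriented and $\rho^{(\lambda)}_{xy}$ is a function of the edge, extended by $0$ to non-edges). Note that the middle and right expressions in \eqref{Green'sf} are symmetric in $f,g$, so proving one of the equalities yields the other by interchanging the roles of $f,g$; concretely, since the left-hand side is manifestly symmetric in $f$ and $g$, it suffices to establish
\[
\frac{1}{2}\sum_{x,y\in V}(\nabla_{xy}f)(\nabla_{xy}g)\rho_{xy}=-\sum_{x\in V}\Delta_{\rho}f(x)\,g(x),
\]
and the other identity follows by swapping $f$ and $g$.

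First I would expand the right-hand side using the definition \eqref{PhysLaplacian}, extended to all $y\in V$ via $\rho_{xy}=0$ for $x\not\sim y$:
\[
-\sum_{x\in V}\Delta_{\rho}f(x)\,g(x)=-\sum_{x,y\in V}(f(y)-f(x))\rho_{xy}\,g(x)=\sum_{x,y\in V}(\nabla_{xy}f)\rho_{xy}\,(-g(x)).
\]
Next I would apply the symmetrization step: swap the dummy indices $x\leftrightarrow y$ in the same sum and use $\rho_{xy}=\rho_{yx}$ to rewrite it as
\[
-\sum_{x,y\in V}(f(x)-f(y))\rho_{xy}\,g(y)=\sum_{x,y\in V}(\nabla_{xy}f)\rho_{xy}\,g(y).
\]
Averaging the two expressions gives
\[
-\sum_{x\in V}\Delta_{\rho}f(x)\,g(x)=\frac{1}{2}\sum_{x,y\in V}(\nabla_{xy}f)(g(y)-g(x))\rho_{xy}=\frac{1}{2}\sum_{x,y\in V}(\nabla_{xy}f)(\nabla_{xy}g)\rho_{xy},
\]
which is the desired identity.

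There is essentially no obstacle here: all sums are finite (since $|V|<\infty$), and the argument is purely algebraic, depending only on the symmetry $\rho_{xy}=\rho_{yx}$ and on the antisymmetry $\nabla_{xy}f=-\nabla_{yx}f$. One small point worth noting is that \eqref{Green'sf} uses an unconjugated bilinear pairing rather than a Hermitian inner product, so the identity holds for all $\lambda\in\Lambda$ and for complex-valued $f,g$ without any restriction.
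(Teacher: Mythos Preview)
Your proof is correct and is exactly the standard symmetrization argument. The paper itself does not reproduce a proof of this lemma but simply cites \cite{Muranova}; your argument is presumably what is given there, and in any case it is the canonical one.
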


\begin{lemma}
For any solution $v$ of (\ref{Dirpr}) we have 
\begin{equation}
\sum_{x\in V}(1-v(x))\rho _{xa}=-\Delta _{\rho }v(a)=\sum_{b\in
B}\Delta _{\rho }v(b)=\frac{1}{2}\sum_{x,y\in V}(\nabla _{xy}v){(\nabla
_{xy}u)}\rho _{xy},  \label{differentEqforP}
\end{equation}%
where $u:V\rightarrow \mathbb{C}$ is any function such that 
\begin{equation}
u(a)=1\text{ \ and\ \ }u\big|_{B}\equiv 0.  \label{u}
\end{equation}
\end{lemma}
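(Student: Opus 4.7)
The plan is to verify each equality in (\ref{differentEqforP}) in turn, with the Dirichlet problem (\ref{Dirpr}) and Green's formula (\ref{Green'sf}) as the only real tools needed. No estimates are required; everything follows by rewriting sums and exploiting the boundary data $v(a)=1$, $v|_B=0$.

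First I would handle the equality $\sum_{x\in V}(1-v(x))\rho_{xa}=-\Delta_\rho v(a)$. Since $v(a)=1$, I can replace the factor $1-v(x)$ by $v(a)-v(x)$ and pull out a sign: using the extended convention $\rho_{xa}=0$ when $x\not\sim a$, the sum over $V$ becomes $-\sum_{y\in V}(v(y)-v(a))\rho_{ay}$, which by definition (\ref{PhysLaplacian}) is exactly $-\Delta_\rho v(a)$.

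Next, for the identity $-\Delta_\rho v(a)=\sum_{b\in B}\Delta_\rho v(b)$, I would apply Green's formula (\ref{Green'sf}) to $f=v$ and $g\equiv 1$. Then $\nabla_{xy}g\equiv 0$, so the left-hand side of (\ref{Green'sf}) vanishes and we obtain $\sum_{x\in V}\Delta_\rho v(x)=0$. Splitting this sum according to the partition $V=\{a\}\sqcup B\sqcup (V\setminus B_0)$ and using that $\Delta_\rho v(x)=0$ for $x\in V\setminus B_0$ by the first line of (\ref{Dirpr}), the claim follows.

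For the last equality, apply Green's formula again, now with $f=u$ and $g=v$, to obtain
\begin{equation*}
\frac{1}{2}\sum_{x,y\in V}(\nabla_{xy}v)(\nabla_{xy}u)\rho_{xy}=-\sum_{x\in V}\Delta_\rho v(x)\,u(x).
\end{equation*}
Splitting the right-hand sum as above: on $V\setminus B_0$ the factor $\Delta_\rho v(x)$ vanishes; on $B$ the factor $u(x)$ vanishes by (\ref{u}); the only surviving contribution is $-\Delta_\rho v(a)\,u(a)=-\Delta_\rho v(a)$ since $u(a)=1$. This matches the previously established quantities. The only subtle point, and the one I would be most careful about, is checking that Green's formula is indeed applicable at the chosen $\lambda\in\Lambda$ (so that $\rho_{xy}$ is well-defined and finite) and that the symmetric extension $\rho_{xy}=0$ for $x\not\sim y$ makes all the sums over $V\times V$ legitimate; both hold by construction, so the argument closes without difficulty.
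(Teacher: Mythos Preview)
Your proposal is correct and follows essentially the same approach as the paper: the first identity by rewriting $1-v(x)=v(a)-v(x)$, the second by Green's formula with the constant function $1$, and the third by Green's formula with the given $u$, each time using the Dirichlet data to kill the unwanted terms. The only cosmetic difference is that you frame the second step via the partition $V=\{a\}\sqcup B\sqcup(V\setminus B_0)$ whereas the paper writes it as $\sum_{b\in B}\Delta_\rho v(b)+\Delta_\rho v(a)=0$ directly; the content is identical.
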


\begin{proof}
Using $v(a)=1$, we have 
\begin{equation*}
\Delta _{\rho }v(a)=\sum_{x\in V}(v(x)-v(a))\rho
_{xa}=\sum_{x\in V}(v(x)-1)\rho _{xa},
\end{equation*}%
which proves the first identity in (\ref{differentEqforP}). Since by (\ref%
{Green'sf}) with $f=v$ and $u\equiv 1$ 
\begin{equation*}
\sum_{x\in V}\Delta _{\rho }v(x)=0
\end{equation*}%
and $\Delta _{\rho }v(x)=0$ for all $x\in V\setminus B_{0}$, we obtain 
\begin{equation*}
\sum_{b\in B}\Delta _{\rho }v(b)+\Delta _{\rho }v(a)=0
\end{equation*}%
whence the second identity in (\ref{differentEqforP}) follows.

Finally, to prove the third identity in (\ref{differentEqforP}), we apply
the Green's formula (\ref{Green'sf}) and obtain 
\begin{equation*}
\frac{1}{2}\sum_{x,y\in V}(\nabla _{xy}v){(\nabla _{xy}u)}\rho
_{xy}=-\sum_{x\in V}\Delta _{\rho }v(x)u(x)=-\Delta _{\rho }v(a),
\end{equation*}%
because $\Delta _{\rho }v(x)=0$ for all $x\in V\setminus B_{0}$, while $u%
\big|_{B}\equiv 0$ and $u(a)=1$.
\end{proof}

Comparing (\ref{Pdef}) with (\ref{differentEqforP}), we obtain the identity%
\begin{equation}
\mathcal{P}(\lambda )=\frac{1}{2}\sum_{x,y\in V}(\nabla _{xy}v){(\nabla
_{xy}u)}\rho _{xy}=\sum_{xy\in E}(\nabla _{xy}v){(\nabla _{xy}u)}\rho _{xy},
\label{Puv}
\end{equation}%
where $v$ is a solution of the Dirichlet problem (\ref{Dirpr}) for $\Gamma $
and $u:V\rightarrow \mathbb{C}$ is any function satisfying (\ref{u}).
Choosing here $u=\overline{v}$ we obtain also the identity 
\begin{equation}
\mathcal{P}=\frac{1}{2}\sum_{x,y\in V}|\nabla _{xy}v|^{2}\rho _{xy}
\label{ccp}
\end{equation}%
(conservation of the complex power).

\begin{theorem}
\label{Zeffmult} For any $\lambda \in \Lambda $, the values of $Z{(\lambda )}
$ and $\mathcal{P}{(\lambda )}$ do not depend on the choice of a solution $v$
of the Dirichlet problem (\ref{Dirpr}).
\end{theorem}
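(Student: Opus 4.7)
The plan is to reduce the theorem to showing that $\Delta_\rho w(a)=0$ whenever $w\colon V\to\mathbb{C}$ vanishes on $B_0$ and is $\rho$-harmonic on $V\setminus B_0$, and then to prove this identity by one clever application of Green's formula.

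First I would take two solutions $v_1,v_2$ of (\ref{Dirpr}) and set $w=v_1-v_2$. By linearity, $w|_{B_0}\equiv 0$ and $\Delta_\rho w(x)=0$ for all $x\in V\setminus B_0$. Using the first equality in (\ref{differentEqforP}), namely $\mathcal{P}(\lambda)=-\Delta_\rho v(a)$, I get
\begin{equation*}
\mathcal{P}(v_1)-\mathcal{P}(v_2)=-\Delta_\rho v_1(a)+\Delta_\rho v_2(a)=-\Delta_\rho w(a),
\end{equation*}
so the entire claim collapses to the single identity $\Delta_\rho w(a)=0$, after which the assertion for $Z(\lambda)=1/\mathcal{P}(\lambda)$ follows for free.

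To prove $\Delta_\rho w(a)=0$, I apply Green's formula (\ref{Green'sf}) with $f=v_1$ and $g=w$, exploiting the two different right-hand sides. The form $-\sum_x \Delta_\rho v_1(x)\,w(x)$ vanishes term by term: for $x\in V\setminus B_0$ we have $\Delta_\rho v_1(x)=0$ because $v_1$ solves (\ref{Dirpr}), while for $x\in B_0$ we have $w(x)=0$. On the other hand, the form $-\sum_x \Delta_\rho w(x)\,v_1(x)$ reduces, thanks to $\Delta_\rho w\equiv 0$ on $V\setminus B_0$, to a sum over $B_0$ only; since $v_1=0$ on $B$ and $v_1(a)=1$, what survives is precisely $-\Delta_\rho w(a)$. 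Equating the two sides, I conclude $\Delta_\rho w(a)=0$, which finishes the proof.

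The step I expect to be delicate is precisely the verification of $\Delta_\rho w(a)=0$: one is tempted to argue via the energy identity (\ref{ccp}) applied to $w$, but since $\rho_{xy}$ is complex the quadratic form $\sum_{x,y}(\nabla_{xy}w)^2\rho_{xy}$ is neither real nor sign-definite, so vanishing of this energy does not force $\nabla_{xy}w=0$, and in general $w$ need not be identically zero (the Dirichlet problem can genuinely have a nontrivial kernel when the discrete Laplacian becomes singular for certain $\lambda\in\Lambda$). The key insight that makes the argument succeed is that one does not need to control the full function $w$; it suffices to extract the boundary quantity $\Delta_\rho w(a)$, and for that a single use of Green's formula with the asymmetric pair $(v_1,w)$ — one function chosen to kill the bulk contribution of $\Delta_\rho v_1$, the other to kill the $B$-contribution of $\Delta_\rho w$ — is exactly what is needed.
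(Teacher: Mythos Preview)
Your proof is correct and is essentially the same argument as the paper's, just repackaged. The paper applies identity~(\ref{differentEqforP}) twice, once with $(v,u)=(v_1,v_2)$ and once with $(v,u)=(v_2,v_1)$, and observes that the bilinear energy $\tfrac12\sum(\nabla_{xy}v_1)(\nabla_{xy}v_2)\rho_{xy}$ is symmetric; you instead set $w=v_1-v_2$ and apply Green's formula~(\ref{Green'sf}) directly to the pair $(v_1,w)$, reading off the two right-hand sides. Unwinding either argument amounts to the same use of the symmetry in Green's formula, and your version is perhaps marginally more direct since it bypasses Lemma~3 and goes straight to~(\ref{Green'sf}). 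Your closing remark about why the naive energy argument via~(\ref{ccp}) fails is also accurate and well observed.
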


\begin{proof}
The proof uses the same argument as in \cite{Muranova}. Let $v_{1}$ and $%
v_{2}$ be two solutions of (\ref{Dirpr}) for the same $\lambda \in \Lambda $%
. By (\ref{differentEqforP}) with $v=v_{1}$ and $u=v_{2}$ we have%
\begin{equation*}
\sum_{x\in V}\left( 1-v_{1}\left( x\right) \right) \rho _{xa}=\frac{1}{2}%
\sum_{x,y\in V}\left( \nabla _{xy}v_{1}\right) \left( \nabla
_{xy}v_{2}\right) \rho _{xy}.
\end{equation*}%
Similarly, we have%
\begin{equation*}
\sum_{x\in V}\left( 1-v_{2}\left( x\right) \right) \rho _{xa}=\frac{1}{2}%
\sum_{x,y\in V}\left( \nabla _{xy}v_{2}\right) \left( \nabla
_{xy}v_{1}\right) \rho _{xy},
\end{equation*}%
whence the identity 
\begin{equation*}
\sum_{x\in V}\left( 1-v_{1}\left( x\right) \right) \rho _{xa}=\sum_{x\in
V}\left( 1-v_{2}\left( x\right) \right) \rho _{xa}
\end{equation*}%
follows. Hence, $v_{1}$ and $v_{2}$ determine the same admittance and
impedance.
\end{proof}

\begin{theorem}
\label{uniqDirpr}The Dirichlet problem (\ref{Dirpr}) has a unique solution $%
v=v^{(\lambda )}$ for all $\lambda \in \Lambda _{0}$ where $\Lambda _{0}$ is
a subset of $\Lambda $ such that $\Lambda \setminus \Lambda _{0}$ is finite.
Besides, $\Lambda _{0}$ contains the domains 
\begin{equation}
\Lambda \cap \{\Re\rho _{xy}^{(\lambda )}>0\ \ \forall xy\in E\},
\label{domainRePos}
\end{equation}
\begin{equation}
\Lambda \cap \{\Im\rho _{xy}^{(\lambda )}>0\ \ \forall xy\in E\}\ \ \ 
\text{and\ \ \ }\Lambda \cap \{\Im\rho _{xy}^{(\lambda )}<0\ \ \forall
xy\in E\}.  \label{domainsIm}
\end{equation}%
Consequently, $\mathcal{P}(\lambda )$ is a rational $\mathbb{C}$-valued
function in $\Lambda _{0}$ and, hence, in any of the domains (\ref%
{domainRePos}) and (\ref{domainsIm}).
\end{theorem}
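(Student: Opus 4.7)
The plan is to view the Dirichlet problem (\ref{Dirpr}) as a square linear system in the unknowns $\{v(x):x\in V\setminus B_{0}\}$, whose coefficient matrix $A(\lambda)$ has entries that are rational functions of $\lambda$ on $\Lambda$. Unique solvability is then equivalent to $\det A(\lambda)\neq 0$. Once uniqueness is verified at even a single value of $\lambda$, the determinant is a non-trivial rational function of $\lambda$ and hence vanishes at only finitely many points of $\Lambda$; taking $\Lambda_{0}$ to be the complement of this zero set yields both the finiteness of $\Lambda\setminus\Lambda_{0}$ and, by Cramer's rule, the fact that $v^{(\lambda)}$ depends rationally on $\lambda$, whence so does $\mathcal{P}(\lambda)$ through (\ref{Pdef}).

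The core step is an energy argument establishing uniqueness in each of the three indicated domains. Given two solutions $v_{1},v_{2}$ of (\ref{Dirpr}) for the same $\lambda$, I would set $w=v_{1}-v_{2}$, which satisfies $\Delta_{\rho}w\equiv 0$ on $V\setminus B_{0}$ and $w|_{B_{0}}\equiv 0$. Applying Green's formula (\ref{Green'sf}) with $f=w$ and $g=\overline{w}$, the right-hand side vanishes termwise (since either $\Delta_{\rho}w(x)=0$ at an interior $x$ or $\overline{w(x)}=0$ at a boundary $x$), giving
\begin{equation*}
\tfrac{1}{2}\sum_{x,y\in V}|\nabla_{xy}w|^{2}\,\rho_{xy}^{(\lambda)}=0.
\end{equation*}
Taking real parts when $\Re\rho_{xy}^{(\lambda)}>0$ on every edge, or imaginary parts when $\pm\Im\rho_{xy}^{(\lambda)}>0$ on every edge, produces a sum of nonnegative terms with strictly positive coefficients that equals zero. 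Hence $\nabla_{xy}w=0$ along every edge; connectedness of $(V,E)$ together with $w|_{B_{0}}=0$ then forces $w\equiv 0$.

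These three energy computations show that $\Lambda_{0}$ contains the domains (\ref{domainRePos}) and (\ref{domainsIm}); in view of (\ref{PositivityOfRe}), the first already contains the entire right half-plane $\{\Re\lambda>0\}$, which is more than enough to guarantee $\det A\not\equiv 0$. The remaining claims about $\Lambda_{0}$ and the rationality of $\mathcal{P}$ then follow immediately, with Theorem \ref{Zeffmult} ensuring well-definedness.

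The one place where a naive approach fails is in the choice of test function: Green's formula (\ref{Green'sf}) is bilinear, not sesquilinear, so the obvious attempt $f=g=w$ would yield only $\sum(\nabla_{xy}w)^{2}\rho_{xy}=0$, a vanishing sum of complex numbers from which no information about $w$ can be extracted. Inserting $\overline{w}$ in the second slot is what simultaneously annihilates the boundary terms on the right and converts the left-hand side into a manifestly nonnegative expression after taking $\Re$ or $\Im$; this is the main subtlety, and with it the rest of the argument is routine.
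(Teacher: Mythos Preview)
Your proposal is correct and follows essentially the same approach as the paper: reduce to a square linear system whose determinant is a rational function of $\lambda$, and verify non-triviality of the determinant via the same energy argument (Green's formula applied to a solution of the homogeneous problem paired with its complex conjugate, then take real or imaginary parts). The paper phrases the energy step in terms of the homogeneous problem $u|_{B_0}=0$ rather than the difference $w=v_1-v_2$, but this is the same computation.
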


\begin{proof}
Let us denote the vertices $V\setminus B_{0}$ by $x_{1},\dots ,x_{n}$ and
rewrite the Dirichlet problem (\ref{Dirpr}) as a linear system $n\times n$: 
\begin{equation}
\sum_{j=1}^{n}A_{ij}X_{j}=P_{i}\ \ \text{for any}\ i=1,...,n,  \label{DirprMatrix}
\end{equation}%
where $X_{j}=v(x_{j})$, $P_{i}=\rho _{x_{i}a}$,
\begin{equation*}
A_{ii}=\sum_{y:y\sim x_{i}}\rho _{x_{i}y}\ \text{ and \ }A_{ij}=-\rho
_{x_{i}x_{j}}\ \text{for\ }i\neq j.
\end{equation*}%
Set also 
\begin{equation*}
\mathcal{D}=\det (A_{ij})
\end{equation*}%
and let $\mathcal{D}_{j}$ be the determinant of the matrix obtained by
replacing the column $j$ in the matrix $\{A_{ij}\}$  by the column $\{P_i\}$. Then, by
Cramer's rule, 
\begin{equation*}
X_{j}=\frac{\mathcal{D}_{j}}{\mathcal{D}}
\end{equation*}%
provided $\mathcal{D}\neq 0$. Of course, all these quantities are functions
of $\lambda $. Since all the coefficients $A_{ij}$ and $P_{i}$ are rational
functions of $\lambda $, also $\mathcal{D}=\mathcal{D}(\lambda )$ and $%
\mathcal{D}_{j}=\mathcal{D}_{j}(\lambda )$ are rational functions of $%
\lambda $. For all $\lambda \in \Lambda $ but a finite number, all functions 
$\mathcal{D}_{j}(\lambda )$ and $\mathcal{D}(\lambda )$ take values in $%
\mathbb{C}$. The existence and uniqueness of a solution  of (\ref{DirprMatrix}) is equivalent to $%
\mathcal{D}(\lambda )\neq 0$. Hence, define $\Lambda _{0}$ as the subset of $%
\Lambda $ where all functions $\mathcal{D}_{j}(\lambda )$ and $\mathcal{D}%
(\lambda )$ take values in $\mathbb{C}$ and, besides, $\mathcal{D}\left(
\lambda \right) \neq 0.$ Since $\mathcal{D}(\lambda )$ is a rational
function on $\lambda $, it may have only finitely many zeros or  vanish identically. 

Hence, it suffices to exclude the latter case, that is, to show that $%
\Lambda _{0}\neq \emptyset $. For that, let us prove that $\Lambda _{0}$
contains the domain (\ref{domainRePos}) that in turn, by (\ref%
{PositivityOfRe}) and (\ref{NonNegativityOfRe}), contains $\left\{ \Re%
\lambda >0\right\} $ and, hence, is non-empty. In order to show that $%
\mathcal{D}\left( \lambda \right) \neq 0$ for any $\lambda $ from (\ref%
{domainRePos}), it suffices to verify that the homogeneous Dirichlet problem 
\begin{equation}
\begin{cases}
\Delta _{\rho }u(x)=0\mbox { on }V\setminus B_{0}, \\ 
u(x)=0\mbox { on }B_{0}%
\end{cases}
\label{DirHom}
\end{equation}%
has a unique solution $u\equiv 0$. Indeed, by Green's formula we have 
\begin{align*}
\sum_{xy\in E}\left\vert \nabla _{xy}u\right\vert ^{2}\rho _{xy}=& \frac{1}{2%
}\sum_{x,y\in V}\left\vert \nabla _{xy}u\right\vert ^{2}\rho
_{xy}=-\sum_{x,y\in V}\Delta _{\rho }u(x)\overline{u}(x) \\
=& -\sum_{x\in V\setminus B_{0}}\Delta _{\rho }u(x)\overline{u}%
(x)-\sum_{x\in B_{0}}\Delta _{\rho }u(x)\overline{u}(x)=0,
\end{align*}%
since $u$ is a solution of (\ref{DirHom}). Since $\Re\rho _{xy}>0$, we
conclude that $\left\vert \nabla _{xy}u\right\vert =0$ on all the edges. By
the connectedness of the graph this implies that $u=\func{const}$. Since $u%
\big|_{B_{0}}\equiv 0$, we conclude that $u\equiv 0$.

In the same way the domains (\ref{domainsIm}) are subsets of $\Lambda _{0}.$

Finally, by the above argument, $v^{\left( \lambda \right) }\left( x\right) $
is a rational function of $\lambda $, so that the last claim follows from (%
\ref{Pdef}).
\end{proof}

\begin{remark}
\label{Rege0}Since $\{\Re\lambda >0\}$ is contained in $\Lambda _{0}$,
we see that $\mathcal{P}(\lambda )$ is a holomorphic function in $\left\{ 
\Re\lambda >0\right\} $. If $R_{xy}>0$ for all $xy\in E$, then also $%
\Lambda \cap \{\Re\lambda \geq 0\}$ is a subset of (\ref{domainRePos}).
\end{remark}

\begin{remark}
The uniqueness of the solution of the Dirichlet problem for the domain (\ref%
{domainRePos}) follows also from \cite[Lemma 4.4]{Vasquez} and (\ref%
{PositivityOfRe}).
\end{remark}

\begin{example}
Let us consider the finite network as at  Fig. \ref{finiteNetwork}, where all inductances, capacitances  and resistance are equal to $1$, with $a=\{1\}$, $B=\{0\}$. Then $\Lambda=\mathbb{C}\setminus\{0\}$.
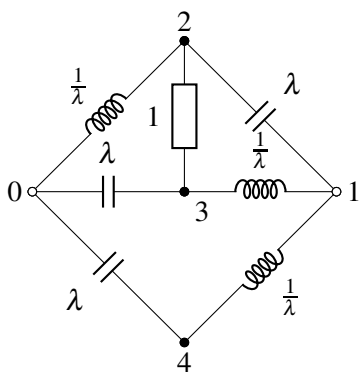
\begin{figure}[H]
\begin{circuitikz} 

\ctikzset{resistor = european}
\ctikzset{label/align = straight}
  \draw

 (0,0) node[anchor=east]{$0$}
  (0,0) to[/tikz/circuitikz/bipoles/length=20pt,C=${\lambda}$, o-*] (2,0) 
  (2,0) node[anchor=north west]{$3$}
  (2,0) to[/tikz/circuitikz/bipoles/length=30pt,L=$\frac{1}{\lambda}$,  *-o] (4,0)
  (4,0) node[anchor=west]{$1$}
  (2,-2) node[anchor=north]{$4$}
  (2,2) node[anchor=south]{$2$}
  (0,0) to[/tikz/circuitikz/bipoles/length=30pt,L=$\frac{1}{\lambda}$,   o-*] (2,2)
  (2,0) to[/tikz/circuitikz/bipoles/length=30pt, R=$1$,   *-*] (2,2)
  (2,2) to[/tikz/circuitikz/bipoles/length=20pt,C=${\lambda}$,   *-o] (4,0)
  (2,-2) to[/tikz/circuitikz/bipoles/length=20pt,C=${\lambda}$,   *-o] (0,0)
  (4,0) to[/tikz/circuitikz/bipoles/length=30pt,L=$\frac{1}{\lambda}$,   o-*] (2,-2);
 \end{circuitikz}
\caption{Example of a finite network}
\label{finiteNetwork}
\end{figure}
The effective admittance of this network is
calculated in \cite[Example 28]{Muranova}. We have 
\begin{equation*}
\mathcal{P}(\lambda)=
\begin{cases}
\infty,\ \ \ \lambda=\pm i,\\
-\frac{3}{2}, \ \lambda=-1,\\
\frac{\lambda^2+\lambda+1}{\lambda^2+1},\  \mbox {otherwise},
\end{cases}
\end{equation*}
and $\Lambda_0=\mathbb{C}\setminus\{-1,0,\pm i\}$.
\end{example}

Our next goal is to define the effective admittance of infinite networks. We
will do it in Section \ref{SecInfinite}, but before that we prove some
estimates for $\mathcal{P}({\lambda })$ on finite networks.

\section{Estimates of the effective admittance of finite networks}

\label{SecEstimates}We use the same setup and notation as in Section \ref%
{SecFinite}.
We skip $\lambda$ in notations $\rho^{(\lambda)}_{xy}$ and $\mathcal{P}(\lambda)$  when the value of $\lambda$  is fixed.
\subsection{An upper bound of the admittance using $\Re \protect%
\lambda $}

\begin{theorem}
\label{theoremInfReepsilon} Let $\lambda \in \Lambda $ be fixed and assume
that, for some $\epsilon >0$,%
\begin{equation}
\inf_{xy\in E}\frac{\Re\rho _{xy}}{\left\vert \rho _{xy}\right\vert }%
\geq \epsilon .  \label{infReepsilon}
\end{equation}%
Then 
\begin{equation}
\left\vert \mathcal{P}\right\vert \leq \frac{1}{\epsilon ^{2}}\sum_{x\sim
a}\left\vert \rho _{xa}\right\vert .  \label{boundPepsilon}
\end{equation}%
The same result is true if one assumes instead of (\ref{infReepsilon}) that 
\begin{equation*}
\inf_{xy\in E}\frac{\Im\rho _{xy}}{\left\vert \rho _{xy}\right\vert }%
\geq \epsilon
\end{equation*}%
or 
\begin{equation*}
\inf_{xy\in E}\frac{-\Im\rho _{xy}}{\left\vert \rho _{xy}\right\vert }%
\geq \epsilon .
\end{equation*}
\end{theorem}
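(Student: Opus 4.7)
The idea is to combine the mixed-test-function identity (\ref{Puv}) with the complex power identity (\ref{ccp}) via Cauchy--Schwarz, using the edgewise positivity hypothesis to turn $|\rho_{xy}|$-weighted energies into $\Re\rho_{xy}$-weighted ones, which assemble to $\Re\mathcal{P}$.

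First I would choose the test function $u=\mathbf{1}_{\{a\}}$, which clearly satisfies (\ref{u}). Since $\nabla_{xy}u$ vanishes unless exactly one of $x,y$ equals $a$, one has $\sum_{xy\in E}|\nabla_{xy}u|^2|\rho_{xy}|=\sum_{y\sim a}|\rho_{ay}|$, precisely the quantity appearing on the right-hand side of (\ref{boundPepsilon}). Applying Cauchy--Schwarz to (\ref{Puv}) then gives
$$|\mathcal{P}|^{2}\;\leq\;\Bigl(\sum_{xy\in E}|\nabla_{xy}v|^{2}|\rho_{xy}|\Bigr)\Bigl(\sum_{y\sim a}|\rho_{ay}|\Bigr).$$

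To control the first factor I would take real parts in the complex-power identity (\ref{ccp}), which, after collapsing the double sum to a single sum over $E$, reads $\Re\mathcal{P}=\sum_{xy\in E}|\nabla_{xy}v|^{2}\Re\rho_{xy}$. The hypothesis (\ref{infReepsilon}) gives $\Re\rho_{xy}\geq\epsilon|\rho_{xy}|$ on every edge, so
$$\sum_{xy\in E}|\nabla_{xy}v|^{2}|\rho_{xy}|\;\leq\;\tfrac{1}{\epsilon}\Re\mathcal{P}\;\leq\;\tfrac{1}{\epsilon}|\mathcal{P}|.$$
Substituting and dividing through by $|\mathcal{P}|$ (the case $\mathcal{P}=0$ being trivial) yields $|\mathcal{P}|\leq\tfrac{1}{\epsilon}\sum_{y\sim a}|\rho_{ay}|$, which \emph{a fortiori} implies (\ref{boundPepsilon}) because $\epsilon\leq 1$ (since $\Re\rho_{xy}\leq|\rho_{xy}|$). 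The two $\Im$-variants are handled identically: in the first, replace $\Re\mathcal{P}$ by $\Im\mathcal{P}$ throughout; in the second, by $-\Im\mathcal{P}$; in both cases the bound $|\pm\Im\mathcal{P}|\leq|\mathcal{P}|$ closes the estimate.

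The only real obstacle is recognising that the edgewise positivity of $\Re\rho_{xy}$ (or $\pm\Im\rho_{xy}$) allows one to estimate the weighted Dirichlet-type energy $\sum|\nabla_{xy}v|^{2}|\rho_{xy}|$ by $\Re\mathcal{P}$ itself, which is then controlled by $|\mathcal{P}|$; this is precisely what closes the Cauchy--Schwarz loop. Everything else is routine manipulation of (\ref{Puv}) and (\ref{ccp}).
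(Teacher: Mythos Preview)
Your argument is correct and uses the same core ingredients as the paper: the test function $u=\mathbf{1}_{\{a\}}$ in the identity (\ref{Puv}) and the complex-power identity (\ref{ccp}) combined with the hypothesis (\ref{infReepsilon}). The only difference is in how the two pieces are glued together. The paper applies Young's inequality $2|ab|\le\epsilon|a|^2+\epsilon^{-1}|b|^2$ to the bilinear expression in (\ref{Puv}), obtaining $|\mathcal{P}|\le\frac{\epsilon}{2}\sum_{xy}|\nabla_{xy}v|^2|\rho_{xy}|+\frac{1}{2\epsilon}U$; subtracting the lower bound $|\mathcal{P}|\ge\epsilon\sum_{xy}|\nabla_{xy}v|^2|\rho_{xy}|$ then yields $\sum_{xy}|\nabla_{xy}v|^2|\rho_{xy}|\le\epsilon^{-2}U$, and hence $|\mathcal{P}|\le\epsilon^{-2}U$ via (\ref{ccp}). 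Your Cauchy--Schwarz route is slightly cleaner and in fact gives the sharper bound $|\mathcal{P}|\le\epsilon^{-1}U$, from which (\ref{boundPepsilon}) follows since $\epsilon\le 1$. One small point you leave implicit: the hypothesis forces $\Re\rho_{xy}>0$ on every edge, so by Theorem~\ref{uniqDirpr} the Dirichlet problem has a (unique) solution and $\mathcal{P}$ is finite, which is needed before invoking (\ref{Puv}) and (\ref{ccp}).
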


\begin{proof}
Under the hypothesis (\ref{infReepsilon}) the Dirichlet problem (\ref{Dirpr}%
) has by Theorem \ref{uniqDirpr} a unique solution $v$. We have by (\ref{ccp}%
) 
\begin{equation}
\left\vert \mathcal{P}\right\vert \geq \Re\mathcal{P}=\sum_{xy\in
E}\left\vert \nabla _{xy}v\right\vert ^{2}\Re\rho _{xy}\geq \epsilon
\sum_{xy\in E}\left\vert \nabla _{xy}v\right\vert ^{2}\left\vert \rho
_{xy}\right\vert .  \label{Pbelowepsilon}
\end{equation}%
Applying (\ref{Puv}) with the function 
\begin{equation*}
u=\mathbf{1}_{\{a\}}
\end{equation*}
and using the inequality 
\begin{equation*}
2|ab|\leq {\epsilon }|a|^{2}+\frac{1}{\epsilon }|b|^{2},
\end{equation*}%
we obtain 
\begin{equation*}
\left\vert \mathcal{P}\right\vert \leq \sum_{xy\in E}\left\vert \nabla
_{xy}v\right\vert \left\vert \nabla _{xy}u\right\vert \left\vert \rho
_{xy}\right\vert \leq \frac{\epsilon }{2}\sum_{xy\in E}|\nabla
_{xy}v|^{2}\left\vert \rho _{xy}\right\vert +\frac{1}{2\epsilon }\sum_{xy\in
E}|\nabla _{xy}u|^{2}\left\vert \rho _{xy}\right\vert .
\end{equation*}%
Setting 
\begin{equation*}
U:=\sum_{xy\in E}\left\vert \nabla _{xy}u\right\vert ^{2}\left\vert \rho
_{xy}\right\vert =\sum_{x\sim a}\left\vert \rho _{xa}\right\vert ,
\end{equation*}%
we obtain 
\begin{equation*}
\left\vert \mathcal{P}\right\vert \leq \frac{\epsilon }{2}\sum_{xy\in
E}|\nabla _{xy}v|^{2}\left\vert \rho _{xy}\right\vert +\frac{1}{2\epsilon }U.
\end{equation*}%
Combing this with (\ref{Pbelowepsilon}) yields 
\begin{equation*}
\frac{\epsilon }{2}\sum_{xy\in E}|\nabla _{xy}v|^{2}\left\vert \rho
_{xy}\right\vert \leq \frac{1}{2\epsilon }U
\end{equation*}%
whence by (\ref{ccp}) 
\begin{equation*}
\left\vert \mathcal{P}\right\vert =\left\vert \sum_{xy\in E}\left\vert
\nabla _{xy}v\right\vert ^{2}\rho _{xy}\right\vert \leq \sum_{xy\in
E}\left\vert \nabla _{xy}v\right\vert ^{2}\left\vert \rho _{xy}\right\vert
\leq \frac{1}{\epsilon ^{2}}U.
\end{equation*}

The conditions with $\Im\rho _{xy}$ are handled in the same way.
\end{proof}

In order to be able to verify (\ref{infReepsilon}), we need the following
lemma.

\begin{lemma}
Let $L$, $R$, $D$ be non-negative real numbers and $\lambda \in \mathbb{C}%
\setminus \{0\}$. If 
\begin{equation*}
z:=R+L\lambda +\frac{D}{\lambda }\neq 0
\end{equation*}%
then 
\begin{equation*}
\frac{\Re z}{|z|}\geq \frac{\Re\lambda }{|\lambda |}
\end{equation*}%
and, for $\rho =\frac{1}{z}$, 
\begin{equation*}
\frac{\Re\rho }{|\rho |}\geq \frac{\Re\lambda }{|\lambda |}.
\end{equation*}
\end{lemma}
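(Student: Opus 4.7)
The plan is to reduce both inequalities to a single one and then exploit the triangle inequality. Since $\rho = 1/z = \bar z/|z|^{2}$, we have $\Re\rho = \Re z/|z|^{2}$ and $|\rho| = 1/|z|$, hence
\begin{equation*}
\frac{\Re\rho}{|\rho|} \;=\; \frac{\Re z}{|z|}
\end{equation*}
as an identity. So only the first inequality $\Re z/|z| \geq \Re\lambda/|\lambda|$ needs a proof.

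Writing $\lambda = re^{i\theta}$ with $r = |\lambda| > 0$ and $\cos\theta = \Re\lambda/|\lambda|$, a direct expansion of $z = R + Lre^{i\theta} + (D/r)e^{-i\theta}$ gives $\Re z = R + (Lr + D/r)\cos\theta$. Clearing the positive denominator $|z|$ and rearranging, the target inequality becomes the equivalent form
\begin{equation*}
R \;\geq\; \cos\theta\,\bigl(|z| - Lr - D/r\bigr).
\end{equation*}
This is set up for the triangle inequality $|z| = |R + L\lambda + D/\lambda| \leq R + Lr + D/r$, which gives $|z| - Lr - D/r \leq R$. For $\cos\theta \geq 0$ a short case analysis on the sign of $|z| - Lr - D/r$ finishes: if it is $\leq 0$, the right-hand side is $\leq 0 \leq R$; if it lies in $(0, R]$, then $\cos\theta \leq 1$ yields $\cos\theta\bigl(|z| - Lr - D/r\bigr) \leq |z| - Lr - D/r \leq R$.

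The main obstacle is handling the sign of $\cos\theta$: the triangle-inequality shortcut above is clean only in the closed right half-plane $\Re\lambda \geq 0$, which is precisely the setting needed for the application in Theorem~\ref{theoremInfReepsilon} (where $\epsilon > 0$ forces $\Re\lambda\geq 0$ via (\ref{PositivityOfRe})--(\ref{NonNegativityOfRe})). As a back-up verification, one may check the algebraic factorization
\begin{equation*}
(\Re z)^{2} - |z|^{2}\cos^{2}\theta \;=\; \sin^{2}\theta\,\bigl(R + 2Lr\cos\theta\bigr)\bigl(R + 2(D/r)\cos\theta\bigr),
\end{equation*}
obtained by writing $(\Re z)^{2} - |z|^{2}\cos^{2}\theta = (\Re z\sin\theta - \Im z\cos\theta)(\Re z\sin\theta + \Im z\cos\theta)$ and substituting the explicit expressions for $\Re z$ and $\Im z = (Lr - D/r)\sin\theta$. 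Both factors on the right are $\geq R \geq 0$ as soon as $\cos\theta \geq 0$, so combined with $\Re z \geq 0$ this gives $\Re z \geq |z|\cos\theta$ directly.
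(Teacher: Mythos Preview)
Your proof is correct and follows the same route as the paper: compute $\Re z$ explicitly, bound $|z|$ by the triangle inequality $|z|\le R+L|\lambda|+D/|\lambda|$, and then use the identity $\Re\rho/|\rho|=\Re z/|z|$. The paper packages the first two steps slightly more efficiently by observing in one line that
\[
\Re z \;=\; R+L\Re\lambda+\frac{D\Re\lambda}{|\lambda|^{2}}
\;\ge\;\Bigl(R+L|\lambda|+\tfrac{D}{|\lambda|}\Bigr)\frac{\Re\lambda}{|\lambda|},
\]
which, combined with $|z|\le R+L|\lambda|+D/|\lambda|$, gives the claim immediately without your case split on the sign of $|z|-Lr-D/r$. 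Your observation that the argument needs $\Re\lambda\ge 0$ is well taken: the paper's ``whence'' step also silently uses this (dividing an inequality by $|z|$ while keeping the direction requires the right-hand side to be nonnegative), and indeed the inequality can fail for $\Re\lambda<0$ (e.g.\ $R=0$, $L=D=1$, $\lambda=-1+i$). Since the lemma is only applied in the corollary under the hypothesis $\Re\lambda>0$, this does no harm; your back-up factorization is a correct alternative confirmation but not needed.
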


\begin{proof}
We have 
\begin{equation*}
\Re z=R+L\Re\lambda +\frac{D\Re\lambda }{|\lambda |^{2}}%
\geq \left( R+L|\lambda |+\frac{D}{|\lambda |}\right) \frac{\Re\lambda 
}{|\lambda |}
\end{equation*}%
and 
\begin{equation*}
|z|\leq R+L|\lambda |+\frac{D}{|\lambda |},
\end{equation*}%
whence 
\begin{equation*}
\frac{\Re z}{|z|}\geq \frac{\Re\lambda }{|\lambda |}.
\end{equation*}%
Finally, we have 
\begin{equation*}
\rho =\frac{1}{z}=\frac{\Re z-i\Im z}{|z|^{2}}
\end{equation*}%
and, hence, 
\begin{equation*}
\frac{\Re\rho }{|\rho |}=\frac{\Re z}{|z|^{2}}|z|=\frac{\Re z%
}{|z|}\geq \frac{\Re\lambda }{|\lambda |}.
\end{equation*}
\end{proof}

\begin{corollary}
If $\Re\lambda >0$ then 
\begin{equation}
\left\vert \mathcal{P}(\lambda )\right\vert \leq \frac{|\lambda |^{2}}{%
\left( \Re\lambda \right) ^{2}}\sum_{x\sim a}\left\vert \rho
_{xa}^{(\lambda )}\right\vert .  \label{boundPlambda}
\end{equation}
\end{corollary}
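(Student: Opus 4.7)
The plan is to deduce this corollary directly by combining the preceding lemma with Theorem \ref{theoremInfReepsilon}, using the ratio $\Re\lambda/|\lambda|$ as the uniform lower bound $\epsilon$.

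First I would observe that the hypothesis $\Re\lambda > 0$ guarantees $\lambda \in \Lambda$ (by the inclusion $\Lambda \supset \{\Re\lambda > 0\}$ established in Section \ref{SecFinite}), so that every admittance $\rho_{xy}^{(\lambda)}$ is a nonzero complex number and the quantities in (\ref{boundPlambda}) are well-defined. Next I would set
\[
\epsilon := \frac{\Re\lambda}{|\lambda|},
\]
which is a strictly positive real number not depending on the edge.

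The key step is to verify hypothesis (\ref{infReepsilon}) with this choice of $\epsilon$. Applying the preceding lemma to each edge $xy$ with $L = L_{xy}$, $R = R_{xy}$, $D = D_{xy}$, and $z = z_{xy}^{(\lambda)}$, and using $\rho_{xy}^{(\lambda)} = 1/z_{xy}^{(\lambda)}$, we immediately obtain
\[
\frac{\Re\rho_{xy}^{(\lambda)}}{|\rho_{xy}^{(\lambda)}|} \geq \frac{\Re\lambda}{|\lambda|} = \epsilon
\]
for every $xy \in E$, which is exactly (\ref{infReepsilon}).

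Finally I would invoke Theorem \ref{theoremInfReepsilon} to conclude
\[
|\mathcal{P}(\lambda)| \leq \frac{1}{\epsilon^{2}} \sum_{x \sim a} |\rho_{xa}^{(\lambda)}| = \frac{|\lambda|^{2}}{(\Re\lambda)^{2}} \sum_{x \sim a} |\rho_{xa}^{(\lambda)}|,
\]
which is the asserted estimate. There is no real obstacle here; the only thing to be slightly careful about is to note that the bound from the lemma is edge-independent, so a single $\epsilon$ works simultaneously for all edges and the hypothesis of Theorem \ref{theoremInfReepsilon} is genuinely satisfied.
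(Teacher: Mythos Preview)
Your proof is correct and follows essentially the same approach as the paper: set $\epsilon = \Re\lambda/|\lambda|$, use the preceding lemma to verify hypothesis (\ref{infReepsilon}) for every edge, and then invoke Theorem \ref{theoremInfReepsilon}. The paper's version is simply more terse, omitting the explicit remark that $\lambda\in\Lambda$.
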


\begin{proof}
Indeed, we have for all $xy\in E$ 
\begin{equation*}
\frac{\Re\rho _{xy}^{(\lambda )}}{|\rho _{xy}^{(\lambda )}|}\geq \frac{%
\Re\lambda }{|\lambda |}=:\epsilon .
\end{equation*}%
Substituting into (\ref{boundPepsilon}) we obtain (\ref{boundPlambda}).
\end{proof}

\begin{corollary}
\label{corollaryBoundPlambdaC} If $\Re\lambda >0$ then 
\begin{equation}
\left\vert \mathcal{P}(\lambda )\right\vert \leq C\frac{|\lambda |^{2}\left(
1+|\lambda |^{2}\right) }{\left( \Re\lambda \right) ^{3}},
\label{boundPlambdaC}
\end{equation}%
where 
\begin{equation*}
C=\sum_{x\sim a}\frac{1}{R_{xa}+L_{xa}+D_{xa}}.
\end{equation*}
\end{corollary}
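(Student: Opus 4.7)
The plan is to feed the previous corollary's bound $|\mathcal{P}(\lambda)| \le \frac{|\lambda|^2}{(\Re\lambda)^2}\sum_{x\sim a}|\rho_{xa}^{(\lambda)}|$ into a per-edge estimate of the form
\begin{equation*}
|\rho_{xy}^{(\lambda)}| \;\le\; \frac{1+|\lambda|^2}{\Re\lambda\,(R_{xy}+L_{xy}+D_{xy})}\qquad\text{whenever }\Re\lambda>0,
\end{equation*}
which, once summed over $x\sim a$, immediately produces the claimed bound with the stated constant $C$.

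For the per-edge estimate I would argue as follows. Since $|\rho_{xy}^{(\lambda)}|=1/|z_{xy}^{(\lambda)}|$ and $|z|\ge \Re z$, it suffices to bound $\Re z_{xy}^{(\lambda)}$ from below. Writing $t=\Re\lambda>0$ and $s=|\lambda|$, one has
\begin{equation*}
\Re z_{xy}^{(\lambda)} \;=\; R_{xy} + L_{xy}\,t + D_{xy}\,\frac{t}{s^2},
\end{equation*}
and the task reduces to the elementary inequality $(1+s^2)\,\Re z_{xy}^{(\lambda)} \ge t\,(R_{xy}+L_{xy}+D_{xy})$.

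A direct expansion rewrites this as
\begin{equation*}
R_{xy}\,(1+s^2 - t) \;+\; L_{xy}\,t\,s^2 \;+\; D_{xy}\,\frac{t}{s^2}\;\ge\;0,
\end{equation*}
and each of the three summands is manifestly non-negative: the last two because $R_{xy},L_{xy},D_{xy},t\ge 0$, and the first because $t\le s\le 1+s^2$ (the latter is the always-valid inequality $s^2-s+1\ge 0$). This is the only step where a bit of thought is needed, but it is completely elementary once one realizes that comparing $t$ with $1+s^2$ via $t\le s\le 1+s^2$ removes any case distinction.

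Combining everything yields
\begin{equation*}
|\mathcal{P}(\lambda)| \;\le\; \frac{|\lambda|^2}{(\Re\lambda)^2}\sum_{x\sim a}\frac{1+|\lambda|^2}{\Re\lambda\,(R_{xa}+L_{xa}+D_{xa})} \;=\; C\,\frac{|\lambda|^2(1+|\lambda|^2)}{(\Re\lambda)^3},
\end{equation*}
as required. I do not expect any real obstacle: the previous corollary already provides the right form in $|\lambda|^2/(\Re\lambda)^2$, and all that is left is to convert $|\rho_{xa}^{(\lambda)}|$ into the inverse of $R_{xa}+L_{xa}+D_{xa}$ at the price of the factor $(1+|\lambda|^2)/\Re\lambda$.
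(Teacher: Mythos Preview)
Your proposal is correct and follows essentially the same approach as the paper: both reduce the claim to the per-edge bound $|\rho_{xy}^{(\lambda)}|\le \frac{1+|\lambda|^2}{\Re\lambda\,(R_{xy}+L_{xy}+D_{xy})}$ via $|z|\ge\Re z$, then substitute into the previous corollary. The only difference is cosmetic: the paper verifies $\Re z\ge (R+L+D)\frac{\Re\lambda}{1+|\lambda|^2}$ by factoring through $\min(1,\Re\lambda,\Re\lambda/|\lambda|^2)$ and observing that the ``$1$'' can be dropped, whereas you expand directly and check term-by-term non-negativity using $t\le s\le 1+s^2$.
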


\begin{proof}
We have for $z=R+L\lambda +\frac{D}{\lambda }$ 
\begin{align*}
|z|\geq & \Re z\geq R+L\Re\lambda +\frac{D\Re\lambda }{%
|\lambda |^{2}} \\
\geq & (R+L+D)\min \left( 1,\Re\lambda ,\frac{\Re\lambda }{%
|\lambda |^{2}}\right) \\
\geq & (R+L+D)\min \left( \Re\lambda ,\frac{\Re\lambda }{%
|\lambda |^{2}}\right) \\
=& (R+L+D)\Re\lambda \min \left( 1,{|\lambda |^{-2}}\right) \\
\geq & (R+L+D)\Re\lambda \frac{1}{1+{|\lambda |^{2}}},
\end{align*}%
since $\frac{\left( \Re\lambda \right) ^{2}}{|\lambda |^{2}}\leq 1$
and, therefore, either $\Re\lambda \leq 1$ or $\frac{\Re\lambda 
}{|\lambda |^{2}}\leq 1$. It follows that, for $\rho =\frac{1}{z}$, 
\begin{equation*}
|\rho |\leq \frac{1}{R+L+D}\frac{1+{|\lambda |^{2}}}{\Re\lambda }.
\end{equation*}%
Hence, 
\begin{equation*}
\sum_{x\sim a}\left\vert \rho _{xa}(\lambda )\right\vert \leq \frac{%
1+|\lambda |^{2}}{\Re\lambda }\sum_{x\sim a}\frac{1}{%
R_{xa}+L_{xa}+D_{xa}}.
\end{equation*}%
Substituting into (\ref{boundPlambda}) we obtain (\ref{boundPlambdaC}).
\end{proof}

\begin{remark}
In the case $L>0$ we can use in the domain $\{\Re\lambda >0\}$ the
estimate $|z|\geq L\Re\lambda $ which implies 
\begin{equation*}
|\rho |\leq \frac{1}{L\Re\lambda }.
\end{equation*}%
Hence, if $L_{xa}>0$ for all $x\sim a$ then 
\begin{equation*}
\sum_{x\sim a}\left\vert \rho _{xa}^{(\lambda )}\right\vert \leq
C^{\prime }\frac{1}{\Re\lambda },
\end{equation*}%
where 
\begin{equation*}
C^{\prime }=\sum_{x\sim a}\frac{1}{L_{xa}}.
\end{equation*}
\end{remark}

Therefore, by (\ref{boundPlambda}) in this case, in the domain $\{\Re%
\lambda >0\}$ we have 
\begin{equation*}
\left\vert \mathcal{P}(\lambda )\right\vert \leq \frac{C^{\prime }|\lambda
|^{2}}{\left( \Re\lambda \right) ^{3}}.
\end{equation*}

\subsection{An upper bound of the admittance using large $\Im \protect%
\lambda$}

\begin{lemma}
\label{lemmaLargeIm} Let $R$, $L$, $D$ be non-negative numbers. Let $L>0$
and $\lambda \in \mathbb{C}$ be such that 
\begin{equation*}
\Im\lambda >0,\ \ |\lambda |^{2}>\frac{D}{L}.
\end{equation*}%
Then 
\begin{equation*}
z:=R+L\lambda +\frac{D}{\lambda }\neq 0
\end{equation*}%
and for $\rho =\frac{1}{z}$ we have 
\begin{equation}
-\frac{\Im\rho }{|\rho |}\geq \frac{1-\frac{D}{L|\lambda |^{2}}}{%
|\lambda |+\frac{D}{L|\lambda |}+\frac{R}{L}}\Im\lambda
\label{largeIm}
\end{equation}%
and 
\begin{equation*}
|\rho |\leq \frac{1}{\left( L-\frac{D}{|\lambda |^{2}}\right) \Im%
\lambda }.
\end{equation*}
\end{lemma}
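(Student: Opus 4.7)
The plan is to compute $\Im z$ directly, observe that the hypotheses force it to be strictly positive, and then read off both conclusions via $\rho = \bar z/|z|^2$ together with the triangle inequality.

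First I would write $\lambda = \alpha + i\beta$ with $\beta = \Im\lambda > 0$, so that $D/\lambda = D\bar\lambda/|\lambda|^2$ contributes imaginary part $-D\beta/|\lambda|^2$, while $L\lambda$ contributes $L\beta$ and $R$ is real. Adding these gives
\begin{equation*}
\Im z \;=\; \left(L - \tfrac{D}{|\lambda|^2}\right)\Im\lambda.
\end{equation*}
Under the assumption $|\lambda|^2 > D/L$ (and $L > 0$), the coefficient $L - D/|\lambda|^2$ is strictly positive, and $\Im\lambda > 0$, hence $\Im z > 0$. In particular $z \neq 0$, which justifies forming $\rho = 1/z$.

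Next, from $\rho = \bar z/|z|^2$ we immediately get $\Im\rho = -\Im z/|z|^2$ and $|\rho| = 1/|z|$, so
\begin{equation*}
-\frac{\Im\rho}{|\rho|} \;=\; \frac{\Im z}{|z|}.
\end{equation*}
For the upper bound on $|z|$ I would use the triangle inequality together with $|D/\lambda| = D/|\lambda|$ to obtain $|z| \le R + L|\lambda| + D/|\lambda|$. Substituting the formula for $\Im z$ and dividing both numerator and denominator by $L$ (permissible since $L > 0$) yields exactly
\begin{equation*}
-\frac{\Im\rho}{|\rho|} \;\ge\; \frac{\bigl(1 - \tfrac{D}{L|\lambda|^2}\bigr)\,\Im\lambda}{|\lambda| + \tfrac{D}{L|\lambda|} + \tfrac{R}{L}},
\end{equation*}
which is (\ref{largeIm}).

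Finally, for the $|\rho|$-estimate I would use the trivial bound $|z| \ge |\Im z|$, and since $\Im z > 0$ this gives $|z| \ge (L - D/|\lambda|^2)\Im\lambda$, whence $|\rho| = 1/|z|$ satisfies the claimed inequality. There is no real obstacle here: the only thing one has to check carefully is the sign in $\Im(D/\lambda)$, which is negative because $\beta > 0$, and the rest is algebraic manipulation plus the triangle inequality.
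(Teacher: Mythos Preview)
Your proof is correct and follows essentially the same route as the paper's: both compute $\Im z = (L - D/|\lambda|^2)\Im\lambda > 0$ directly, deduce $z \neq 0$, use the triangle inequality to bound $|z|$ from above, and combine these via $-\Im\rho/|\rho| = \Im z/|z|$ and $|z| \ge \Im z$. The only cosmetic difference is that you spell out $\lambda = \alpha + i\beta$ and $\rho = \bar z/|z|^2$ explicitly, whereas the paper states the resulting identities without that intermediate step.
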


\begin{proof}
We have 
\begin{equation*}
\Im z=L\Im \lambda-\frac{D\Im \lambda}{|\lambda|^2}%
=\left(L-\frac{D}{|\lambda|^2}\right)\Im \lambda>0.
\end{equation*}
In particular, $z\ne 0$. We have also 
\begin{equation*}
|z|\le R+L|\lambda|+\frac{D}{|\lambda|}.
\end{equation*}
It follows that 
\begin{equation*}
-\frac{\Im \rho}{|\rho|}=\frac{\Im z}{|z|}\ge \frac{\left(L-%
\frac{D}{|\lambda|^2}\right)\Im \lambda}{R+L|\lambda|+\frac{D}{%
|\lambda|}}\ge \frac{1-\frac{D}{L|\lambda|^2}}{|\lambda|+\frac{D}{L|\lambda|}%
+\frac{R}{L}}\Im \lambda
\end{equation*}
which proves (\ref{largeIm}). Finally, we have 
\begin{equation*}
|\rho|=\frac{1}{|z|}\le \frac{1}{\Im z}= \frac{1}{\left(L-\frac{D}{%
|\lambda|^2}\right)\Im \lambda}.
\end{equation*}
\end{proof}

\begin{theorem}
\label{HolomorphicDomainLargeIm} Assume that $L_{xy}>0$ for all $xy\in E$.
Set 
\begin{equation*}
S_{D}=\sup_{xy\in E}\frac{D_{xy}}{L_{xy}},\text{\ \ \ \ }\ S_{R}=\sup_{xy\in E}%
\frac{R_{xy}}{L_{xy}},
\end{equation*}%
and 
\begin{equation*}
C^{\prime }=\sum_{x\sim a}\frac{1}{L_{xa}}.
\end{equation*}%
Then, in the domain, 
\begin{equation}
\Omega =\{\lambda \in \mathbb{C}\mid \Im\lambda \neq 0\ \ \text{and\ \ 
}|\lambda |^{2}>S_{D}\}
\end{equation}%
the function $\mathcal{P}(\lambda )$ is holomorphic and 
\begin{equation*}
\left\vert \mathcal{P}(\lambda )\right\vert \leq \frac{C^{\prime }(2|\lambda
|+S_{R})^{2}|\lambda |^{6}}{(|\lambda |^{2}-S_{D})^{3}|\Im\lambda |^{3}%
}.
\end{equation*}
\end{theorem}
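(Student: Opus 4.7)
The plan is to mirror the structure of Corollary \ref{corollaryBoundPlambdaC} but substitute the bounds from Lemma \ref{lemmaLargeIm} (which control things via $\Im\lambda$) in place of those based on $\Re\lambda$. First, by symmetry it suffices to treat the case $\Im\lambda>0$: indeed, if $\Im\lambda<0$ then $\bar\lambda$ lies in $\Omega\cap\{\Im\lambda>0\}$, and Lemma (\ref{Pbar}) gives $|\mathcal{P}(\lambda)|=|\mathcal{P}(\bar\lambda)|$, so an estimate on the upper half of $\Omega$ automatically transfers. So assume $\Im\lambda>0$ and $|\lambda|^2>S_D$.

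Fix any edge $xy$. Since $D_{xy}/L_{xy}\le S_D<|\lambda|^2$, Lemma \ref{lemmaLargeIm} applies and gives $\rho_{xy}\ne 0$ with
\begin{equation*}
-\frac{\Im\rho_{xy}}{|\rho_{xy}|}\ge\frac{1-\frac{D_{xy}}{L_{xy}|\lambda|^2}}{|\lambda|+\frac{D_{xy}}{L_{xy}|\lambda|}+\frac{R_{xy}}{L_{xy}}}\,\Im\lambda.
\end{equation*}
I would bound the numerator below using $D_{xy}/L_{xy}\le S_D$, giving $1-D_{xy}/(L_{xy}|\lambda|^2)\ge(|\lambda|^2-S_D)/|\lambda|^2$. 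For the denominator, I use $D_{xy}/L_{xy}\le S_D$ and $R_{xy}/L_{xy}\le S_R$ together with the key observation that, since $|\lambda|^2>S_D$, one has $S_D/|\lambda|<|\lambda|$, hence the denominator is at most $2|\lambda|+S_R$. This produces the uniform lower bound
\begin{equation*}
\epsilon:=\frac{(|\lambda|^2-S_D)\,\Im\lambda}{|\lambda|^2(2|\lambda|+S_R)}\le\inf_{xy\in E}\frac{-\Im\rho_{xy}}{|\rho_{xy}|}.
\end{equation*}

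In particular, $\Im\rho_{xy}<0$ for every edge, so $\lambda$ lies in the domain (\ref{domainsIm}) and Theorem \ref{uniqDirpr} ensures $\mathcal{P}(\lambda)$ is a rational $\mathbb{C}$-valued function on $\Omega\cap\{\Im\lambda>0\}$; the symmetric statement handles the lower half. Thus $\mathcal{P}$ is holomorphic on $\Omega$. Next, Theorem \ref{theoremInfReepsilon} (in its $-\Im$ form) gives
\begin{equation*}
|\mathcal{P}|\le\frac{1}{\epsilon^2}\sum_{x\sim a}|\rho_{xa}|.
\end{equation*}
For the sum, I use the second inequality of Lemma \ref{lemmaLargeIm}, namely $|\rho_{xa}|\le\bigl((L_{xa}-D_{xa}/|\lambda|^2)\Im\lambda\bigr)^{-1}$. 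Factoring $L_{xa}$ out and again using $D_{xa}/L_{xa}\le S_D$ yields $L_{xa}-D_{xa}/|\lambda|^2\ge L_{xa}(|\lambda|^2-S_D)/|\lambda|^2$, hence
\begin{equation*}
\sum_{x\sim a}|\rho_{xa}|\le\frac{|\lambda|^2}{(|\lambda|^2-S_D)\,\Im\lambda}\sum_{x\sim a}\frac{1}{L_{xa}}=\frac{C'|\lambda|^2}{(|\lambda|^2-S_D)\,\Im\lambda}.
\end{equation*}

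Multiplying the two ingredients and cleaning up the powers of $|\lambda|$, $(|\lambda|^2-S_D)$ and $\Im\lambda$ should give
\begin{equation*}
|\mathcal{P}(\lambda)|\le\frac{|\lambda|^4(2|\lambda|+S_R)^2}{(|\lambda|^2-S_D)^2(\Im\lambda)^2}\cdot\frac{C'|\lambda|^2}{(|\lambda|^2-S_D)\,\Im\lambda}=\frac{C'(2|\lambda|+S_R)^2|\lambda|^6}{(|\lambda|^2-S_D)^3(\Im\lambda)^3},
\end{equation*}
which, upon replacing $\Im\lambda$ with $|\Im\lambda|$ for the reflected half, is the asserted bound. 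The only subtle step is the denominator simplification $|\lambda|+S_D/|\lambda|\le 2|\lambda|$, which requires the hypothesis $|\lambda|^2>S_D$; the rest is routine combination of Lemma \ref{lemmaLargeIm} with Theorem \ref{theoremInfReepsilon}.
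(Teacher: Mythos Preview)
Your proof is correct and follows essentially the same approach as the paper: reduce by the symmetry $\lambda\mapsto\bar\lambda$ to $\Im\lambda>0$, use Lemma~\ref{lemmaLargeIm} to get the uniform lower bound on $-\Im\rho_{xy}/|\rho_{xy}|$ (with the same simplification $S_D/|\lambda|<|\lambda|$ in the denominator), invoke Theorem~\ref{uniqDirpr} for holomorphicity, apply Theorem~\ref{theoremInfReepsilon} with this $\epsilon$, and bound $\sum_{x\sim a}|\rho_{xa}|$ again via Lemma~\ref{lemmaLargeIm}. The computations and the resulting constants match the paper's proof line by line.
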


\begin{figure}[H]
\begin{tikzpicture}[scale=0.7]
    \begin{scope}[thick,font=\scriptsize]


    \end{scope}
   \path [draw=none,fill=gray,semitransparent] (-5,-5) rectangle (5,5);
    \draw [dashed,fill=white] (0,0) circle (3);
   \path [draw=none,fill=white] (-5,-0.05) rectangle (5,0.05);
    \draw (3,-3pt) -- (3,3pt)   node [above right] {$\sqrt{S_D}$};
    \draw (-3pt,3) -- (3pt,3)   node [above right] {$i\sqrt{S_D}$};
    \draw [->] (-5,0) -- (5,0) node [below left]  {$\Re \lambda$};
    \draw [->] (0,-5) -- (0,5) node [below right] {$\Im \lambda$};
    \draw [] (-5,0) -- (-2,0);
    \node [below right,gray] at (+3,+3.5) {$\Omega$};
\end{tikzpicture}
\caption{The domain $\Omega =\{\lambda \in \mathbb{C}\mid \Im\lambda \neq 0\ \ \text{and\ \ 
}|\lambda |^{2}>S_{D}\}$.}
\label{figOmega}
\end{figure}

\begin{proof}
By the symmetry $\lambda \rightarrow \overline{\lambda }$, it suffices to
prove the both claims in the domain 
\begin{equation*}
\Omega _{+}=\{\lambda \in \mathbb{C}\mid \Im\lambda >0\ \text{and\ \ }%
|\lambda |^{2}>S_{D}\}.
\end{equation*}%
For $\lambda \in \Omega _{+}$ we have by Lemma \ref{lemmaLargeIm} that $%
z_{xy}\neq 0$, whence $\lambda \in \Lambda $. By Lemma \ref{lemmaLargeIm} we
have for all $xy\in E$ and $\lambda \in \Omega _{+}$ 
\begin{equation}
-\frac{\Im\rho _{xy}}{|\rho _{xy}|}\geq \frac{1-\frac{D_{xy}}{%
L_{xy}|\lambda |^{2}}}{|\lambda |+\frac{D_{xy}}{L_{xy}|\lambda |}+\frac{%
R_{xy}}{L_{xy}}}\Im\lambda \geq \frac{1-\frac{S_{D}}{|\lambda |^{2}}}{%
|\lambda |+\frac{S_{D}}{|\lambda |}+S_{R}}\Im\lambda \geq \frac{1-%
\frac{S_{D}}{|\lambda |^{2}}}{2|\lambda |+S_{R}}\Im\lambda >0,
\label{SdSreq1}
\end{equation}%
since $\frac{S_{D}}{|\lambda |}<|\lambda |$. By Theorem \ref{uniqDirpr} we
conclude that $\mathcal{P}(\lambda )$ is a holomorphic function in $\Omega
_{+}$.

Using (\ref{SdSreq1}), we obtain by Theorem \ref{theoremInfReepsilon} that
for $\lambda \in \Omega _{+}$ 
\begin{equation}
\left\vert \mathcal{P}(\lambda )\right\vert \leq \left( \frac{2|\lambda
|+S_{R}}{\left( 1-\frac{S_{D}}{|\lambda |^{2}}\right) \Im\lambda }%
\right) ^{2}\sum_{x\sim a}\left\vert \rho _{xa}^{(\lambda
)}\right\vert .  \label{SdSreq2}
\end{equation}%
Next, we have by Lemma \ref{lemmaLargeIm} 
\begin{equation*}
|\rho _{xy}|\leq \frac{1}{\left( L_{xy}-\frac{D_{xy}}{|\lambda |^{2}}\right) 
\Im\lambda }\leq \frac{1}{L_{xy}\left( 1-\frac{S_{D}}{|\lambda |^{2}}%
\right) \Im\lambda },
\end{equation*}%
whence 
\begin{equation*}
\sum_{x\sim a}\left\vert \rho _{xa}\right\vert \leq \frac{%
\sum_{x\sim a}\left( L_{xa}\right) ^{-1}}{\left( 1-\frac{S_{D}}{%
|\lambda |^{2}}\right) \Im\lambda }=\frac{C^{\prime }}{\left( 1-\frac{%
S_{D}}{|\lambda |^{2}}\right) \Im\lambda }.
\end{equation*}%
It follows from (\ref{SdSreq2}) that 
\begin{equation*}
\left\vert \mathcal{P}(\lambda )\right\vert \leq \left( \frac{2|\lambda
|+S_{R}}{\left( 1-\frac{S_{D}}{|\lambda |^{2}}\right) \Im\lambda }%
\right) ^{2}\frac{C^{\prime }}{\left( 1-\frac{S_{D}}{|\lambda |^{2}}\right) 
\Im\lambda }=\frac{C^{\prime }(2|\lambda |+S_{R})^{2}}{\left( 1-\frac{%
S_{D}}{|\lambda |^{2}}\right) ^{3}\left( \Im\lambda \right) ^{3}}
\end{equation*}%
which was to be proved.
\end{proof}

\begin{corollary}
Under the hypothesis of Theorem \ref{HolomorphicDomainLargeIm}, assume in
addition that $R_{xy}=0$ for all $xy\in E$. Then $\mathcal{P}(\lambda )$ is
holomorphic in $\mathbb{C}\setminus J$ where 
\begin{equation*}
J=\left[ -i\sqrt{S_{D}},i\sqrt{S_{D}}\right] .
\end{equation*}
\end{corollary}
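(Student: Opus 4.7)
The plan is to cover $\mathbb{C}\setminus J$ by three open sets on each of which $\mathcal{P}(\lambda)$ is already known (or can be shown with one line) to be holomorphic, and then to check that their union is exactly $\mathbb{C}\setminus J$. The three pieces I have in mind are:
\[
\Omega=\{\lambda\in\mathbb{C} : \Im\lambda\neq 0,\ |\lambda|^{2}>S_{D}\},\qquad
\{\Re\lambda>0\},\qquad \{\Re\lambda<0\}.
\]

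First I would invoke Theorem~\ref{HolomorphicDomainLargeIm} directly to get holomorphicity of $\mathcal{P}$ on $\Omega$ (the theorem only requires $L_{xy}>0$ for all $xy\in E$, which is part of our hypothesis). Next I would cite Remark~\ref{Rege0} for holomorphicity on the right half-plane $\{\Re\lambda>0\}$. For the left half-plane, the added assumption $R_{xy}=0$ activates part~(b) of the symmetry lemma, giving $\mathcal{P}(-\lambda)=-\mathcal{P}(\lambda)$ on $\Lambda\cap(-\Lambda)$. Since $-\mathcal{P}(-\lambda)$ is holomorphic on $\{\Re\lambda<0\}$ (its argument lies in the already-treated right half-plane) and agrees with the rational function $\mathcal{P}$ on an open subset of $\Lambda_{0}$, the identity theorem upgrades the pointwise symmetry to holomorphicity of $\mathcal{P}$ itself on $\{\Re\lambda<0\}$.

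What remains is a purely set-theoretic check that
\[
\Omega\cup\{\Re\lambda>0\}\cup\{\Re\lambda<0\}=\mathbb{C}\setminus J.
\]
A point omitted from the left side must satisfy $\Re\lambda=0$ (otherwise one of the two half-planes contains it) together with either $\Im\lambda=0$ or $|\lambda|^{2}\leq S_{D}$ (otherwise it lies in $\Omega$). Both possibilities force $\lambda=it$ with $|t|\leq\sqrt{S_{D}}$, i.e.\ $\lambda\in J$. Since the three pieces all sit inside $\Lambda_{0}$, the function $\mathcal{P}$ is unambiguously defined as a single rational function on each, so the local holomorphic pieces automatically glue to a holomorphic function on the union.

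I do not expect a genuine obstacle here: all the analytic work has been absorbed into Theorem~\ref{HolomorphicDomainLargeIm} and the symmetry lemma, and the corollary is really a geometric observation that the domain $\Omega$ together with the two open half-planes exhausts the complement of the imaginary segment $J$. The only subtlety worth double-checking is the passage from the pointwise symmetry $\mathcal{P}(-\lambda)=-\mathcal{P}(\lambda)$ to holomorphicity on $\{\Re\lambda<0\}$; but because both sides are rational functions of $\lambda$ on $\Lambda_{0}$, this is immediate.
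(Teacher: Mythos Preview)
Your proposal is correct and follows essentially the same route as the paper's proof: invoke the symmetry $\mathcal{P}(-\lambda)=-\mathcal{P}(\lambda)$ (available because $R_{xy}=0$) to pass from the right half-plane to the left, combine this with the domain $\Omega$ supplied by Theorem~\ref{HolomorphicDomainLargeIm}, and verify that the union is $\mathbb{C}\setminus J$. The paper compresses all of this into two sentences, writing the union as $\{\Re\lambda\neq 0\}\cup\{(\Im\lambda)^{2}>S_{D}\}$, whereas you spell out the three pieces separately and are more explicit about why the symmetry yields holomorphicity on the left half-plane; the content is the same.
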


\begin{proof}
In this case we have the symmetry $\mathcal{P}(-\lambda )=-\mathcal{P}%
(\lambda )$. By Theorem \ref{uniqDirpr} (see Remark \ref{Rege0}) and Theorem %
\ref{HolomorphicDomainLargeIm}, $\mathcal{P}(\lambda )$ is holomorphic in the
union 
\begin{equation*}
\{\Re\lambda \neq 0\}\cup \{\left( \Im\lambda \right)
^{2}>S_{D}\},
\end{equation*}%
that coincides with $\mathbb{C}\setminus J$.
\end{proof}

\subsection{An upper bound of the admittance using small $\Im \protect%
\lambda$}

\begin{lemma}
\label{lemmaSmallIm} Let $R$, $L$, $D$ be non-negative numbers. Let $L>0$
and $\lambda \in \mathbb{C}$ be such that 
\begin{equation*}
\Im\lambda >0\ \text{\ and}\ \ \left\vert \lambda \right\vert ^{2}<%
\frac{D}{L}.
\end{equation*}%
Then 
\begin{equation*}
z:=R+L\lambda +\frac{D}{\lambda }\neq 0
\end{equation*}%
and, for $\rho =\frac{1}{z}$, we have 
\begin{equation}
\frac{\Im\rho }{|\rho |}\geq \frac{\frac{D}{L|\lambda |^{2}}-1}{%
|\lambda |+\frac{D}{L|\lambda |}+\frac{R}{L}}\Im\lambda
\label{smallIm}
\end{equation}%
and 
\begin{equation*}
|\rho |\leq \frac{1}{\left( \frac{D}{|\lambda |^{2}}-L\right) \Im%
\lambda }.
\end{equation*}
\end{lemma}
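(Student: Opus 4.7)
The plan is to mirror the proof of Lemma \ref{lemmaLargeIm} almost verbatim, tracking the sign change that arises because now $|\lambda|^2 < D/L$ instead of $|\lambda|^2 > D/L$. The key identity is the computation of $\Im z$: since $\Im(1/\lambda) = -\Im\lambda/|\lambda|^2$, one gets
\[
\Im z = \left(L - \frac{D}{|\lambda|^2}\right)\Im\lambda,
\]
and under the current hypothesis this quantity is strictly negative. Consequently $z \neq 0$, which gives the first claim.

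Next I would pass to $\rho = 1/z = \bar z / |z|^2$ to get $\Im \rho = -\Im z / |z|^2 > 0$. Dividing by $|\rho| = 1/|z|$ yields the clean identity
\[
\frac{\Im\rho}{|\rho|} \;=\; \frac{-\Im z}{|z|} \;=\; \frac{\left(\frac{D}{|\lambda|^2} - L\right)\Im\lambda}{|z|}.
\]
For the denominator I would use the triangle-inequality bound $|z| \leq R + L|\lambda| + D/|\lambda|$, exactly as in Lemma \ref{lemmaLargeIm}, and then factor out $L$ in numerator and denominator to produce the form stated in (\ref{smallIm}).

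For the second estimate, $|\rho| = 1/|z| \leq 1/|\Im z|$, and $|\Im z| = \left(\frac{D}{|\lambda|^2} - L\right)\Im\lambda$ by the sign analysis above, giving the desired bound.

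There is no real obstacle here: the argument is a direct sign-flipped copy of Lemma \ref{lemmaLargeIm}. The only thing to be careful about is keeping track of which of $L - D/|\lambda|^2$ and $D/|\lambda|^2 - L$ is the positive quantity, so that the absolute values are unfolded with the correct sign in both the numerator of $\Im\rho/|\rho|$ and in the bound for $|\rho|$.
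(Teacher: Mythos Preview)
Your proposal is correct and follows essentially the same route as the paper's proof: compute $\Im z = (L - D/|\lambda|^2)\Im\lambda < 0$ to get $z\neq 0$, use $\Im\rho/|\rho| = -\Im z/|z|$ together with the triangle-inequality bound $|z|\le R+L|\lambda|+D/|\lambda|$ and factor out $L$ for (\ref{smallIm}), and bound $|\rho|\le 1/|\Im z|$ for the last estimate. There is nothing to add.
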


\begin{proof}
We have 
\begin{equation*}
-\Im z=-L\Im \lambda+\frac{D\Im \lambda}{|\lambda|^2}%
=\left(\frac{D}{|\lambda|^2}-L\right)\Im \lambda>0.
\end{equation*}
In particular, $z\ne 0$. We have also 
\begin{equation*}
|z|\le R+L|\lambda|+\frac{D}{|\lambda|}.
\end{equation*}
It follows that 
\begin{equation*}
\frac{\Im \rho}{|\rho|}=-\frac{\Im z}{|z|}\ge \frac{\left(\frac{D%
}{|\lambda|^2}-L\right)\Im \lambda}{R+L|\lambda|+\frac{D}{|\lambda|}}%
\ge \frac{\frac{D}{L|\lambda|^2}-1}{|\lambda|+\frac{D}{L|\lambda|}+\frac{R}{L%
}}\Im \lambda
\end{equation*}
which proves (\ref{smallIm}). Finally, we have 
\begin{equation*}
|\rho|=\frac{1}{|z|}\le \frac{1}{-\Im z}=\frac{1}{\left(\frac{D}{%
|\lambda|^2}-L\right)\Im \lambda}.
\end{equation*}
\end{proof}

\begin{theorem}
\label{HolomorphicDomainSmallIm} Assume that $L_{xy}>0$ for all $xy\in E$.
Set 
\begin{equation*}
S_{D}=\sup_{xy\in E}\frac{D_{xy}}{L_{xy}},\ \ S_{D}^{\ast }=\inf_{xy\in E}%
\frac{D_{xy}}{L_{xy}},\ \ \ S_{R}=\sup_{xy\in E}\frac{R_{xy}}{L_{xy}},
\end{equation*}%
and 
\begin{equation*}
C^{\prime }=\sum_{x\sim a}\frac{1}{L_{xa}}.
\end{equation*}%
Then in the domain 
\begin{equation}
\Omega ^{\ast }=\{\lambda \in \mathbb{C}\mid \Im\lambda \neq 0\ \ 
\text{and\ \ }|\lambda |^{2}<S_{D}^{\ast }\}
\end{equation}%
the function $\mathcal{P}(\lambda )$ is holomorphic and 
\begin{equation}
\left\vert \mathcal{P}(\lambda )\right\vert \leq \frac{C^{\prime }(|\lambda
|^{2}+S_{R}|\lambda |+S_{D})^{2}|\lambda |^{4}}{(S_{D}^{\ast }-|\lambda
|^{2})^{3}|\Im\lambda |^{3}}.  \label{RhoinOmegaStar}
\end{equation}
\end{theorem}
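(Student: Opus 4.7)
The plan is to mimic the proof of Theorem \ref{HolomorphicDomainLargeIm}, but using Lemma \ref{lemmaSmallIm} in place of Lemma \ref{lemmaLargeIm}. Concretely, by the symmetry $\lambda \mapsto \overline{\lambda}$ (part (a) of the first lemma in Section \ref{SecFinite}), it suffices to establish both claims on the half-domain
\[
\Omega^{*}_{+} = \{\lambda \in \mathbb{C}\mid \Im\lambda > 0\ \text{and}\ |\lambda|^{2}<S_{D}^{\ast}\}.
\]
For every edge $xy \in E$ we have $D_{xy}/L_{xy} \geq S_{D}^{\ast} > |\lambda|^{2}$, so Lemma \ref{lemmaSmallIm} applies and yields $z_{xy}^{(\lambda)} \neq 0$ for all edges, i.e.\ $\lambda \in \Lambda$, together with a strict positivity of $\Im\rho_{xy}$.

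Next, I would extract uniform bounds. For the ratio, Lemma \ref{lemmaSmallIm} combined with $\frac{D_{xy}}{L_{xy}|\lambda|^{2}}-1 \geq \frac{S_{D}^{\ast}-|\lambda|^{2}}{|\lambda|^{2}}$ and $|\lambda|+\frac{D_{xy}}{L_{xy}|\lambda|}+\frac{R_{xy}}{L_{xy}} \leq \frac{|\lambda|^{2}+S_{R}|\lambda|+S_{D}}{|\lambda|}$ gives
\[
\frac{\Im\rho_{xy}}{|\rho_{xy}|} \geq \frac{(S_{D}^{\ast}-|\lambda|^{2})\,\Im\lambda}{|\lambda|\,(|\lambda|^{2}+S_{R}|\lambda|+S_{D})} =: \epsilon(\lambda) > 0,
\]
uniformly in $xy$. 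In particular $\lambda$ lies in the domain $\Lambda \cap \{\Im\rho_{xy}^{(\lambda)}>0\ \forall xy\}$ of (\ref{domainsIm}), so Theorem \ref{uniqDirpr} gives the existence and uniqueness of the solution $v^{(\lambda)}$ and the holomorphy of $\mathcal{P}(\lambda)$ on $\Omega_{+}^{\ast}$.

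For the quantitative bound, I would apply the $\Im$-version of Theorem \ref{theoremInfReepsilon} with the $\epsilon(\lambda)$ just extracted, obtaining
\[
|\mathcal{P}(\lambda)| \leq \frac{1}{\epsilon(\lambda)^{2}} \sum_{x\sim a}|\rho_{xa}^{(\lambda)}|.
\]
The remaining sum is estimated via the second conclusion of Lemma \ref{lemmaSmallIm}: $|\rho_{xy}^{(\lambda)}| \leq \frac{1}{L_{xy}(D_{xy}/L_{xy}-|\lambda|^{2})\Im\lambda/|\lambda|^{2}\cdot \ldots}$—more cleanly, $|\rho_{xy}^{(\lambda)}| \leq \frac{|\lambda|^{2}}{L_{xy}(S_{D}^{\ast}-|\lambda|^{2})\,\Im\lambda}$, so that
\[
\sum_{x\sim a}|\rho_{xa}^{(\lambda)}| \leq \frac{C'\,|\lambda|^{2}}{(S_{D}^{\ast}-|\lambda|^{2})\,\Im\lambda}.
\]
Substituting the expressions for $\epsilon(\lambda)$ and for the sum into the bound above and simplifying gives exactly (\ref{RhoinOmegaStar}).

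There is no genuine obstacle; the argument is a parallel of Theorem \ref{HolomorphicDomainLargeIm} with the roles of $S_{D}$ and $|\lambda|^{2}$ swapped. The only bookkeeping care needed is to correctly read off the uniform $\epsilon$ from Lemma \ref{lemmaSmallIm} and to track the powers of $|\lambda|$, $\Im\lambda$, and $(S_{D}^{\ast}-|\lambda|^{2})$ through the final substitution.
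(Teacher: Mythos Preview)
Your proposal is correct and follows essentially the same approach as the paper's own proof: reduce to $\Omega_{+}^{\ast}$ by the conjugation symmetry, apply Lemma \ref{lemmaSmallIm} to obtain a uniform lower bound on $\Im\rho_{xy}/|\rho_{xy}|$, invoke Theorem \ref{uniqDirpr} for holomorphy, and then combine Theorem \ref{theoremInfReepsilon} with the modulus bound on $|\rho_{xy}|$ from Lemma \ref{lemmaSmallIm} to obtain (\ref{RhoinOmegaStar}). The only cosmetic difference is that you write the intermediate expressions with common denominators (e.g.\ $\epsilon(\lambda)=\frac{(S_D^{\ast}-|\lambda|^2)\Im\lambda}{|\lambda|(|\lambda|^2+S_R|\lambda|+S_D)}$) while the paper leaves them as $\frac{S_D^{\ast}/|\lambda|^2-1}{|\lambda|+S_D/|\lambda|+S_R}\Im\lambda$; these are identical, and the paper also explicitly notes the trivial case $S_D^{\ast}=0$ (empty domain), which you may wish to mention for completeness.
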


\begin{figure}[H]
\begin{tikzpicture}[scale=0.7]
    \begin{scope}[thick,font=\scriptsize]


    \end{scope}
   \draw [dashed,fill=gray,semitransparent] (0,0) circle (2);
    \draw [dashed,fill=none] (0,0) circle (2);
   \path [draw=none,fill=white] (-5,-0.05) rectangle (5,0.05);
    \draw (2,-3pt) -- (2,3pt)   node [above right] {$\sqrt{S^{\ast }_D}$};
    \draw (-3pt,2) -- (3pt,2)   node [above right] {$i\sqrt{S^{\ast }_D}$};
    \draw [->] (-5,0) -- (5,0) node [below left]  {$\Re \lambda$};
    \draw [->] (0,-5) -- (0,5) node [below right] {$\Im \lambda$};
    \draw [] (-5,0) -- (-2,0);
    \node [below right,gray] at (0.5,1.5) {$\Omega^*$};
\end{tikzpicture}
\caption{The domain $\Omega^{\ast } =\{\lambda \in \mathbb{C}\mid \Im\lambda \neq 0\ \ \text{and\ \ 
}|\lambda |^{2}<S^{\ast }_{D}\}$.}
\label{figOmegaAst}
\end{figure}

\begin{proof}
If $S_{D}^{\ast }=0$ then $\Omega ^{\ast }=\emptyset $ and there is nothing
to prove. Let $S_{D}^{\ast }>0$. By the symmetry $\lambda \rightarrow 
\overline{\lambda }$, it suffices to prove the both claims in the domain 
\begin{equation*}
\Omega _{+}^{\ast }=\{\lambda \in \mathbb{C}\mid \Im\lambda >0%
\mbox{
and }|\lambda |^{2}<S_{D}^{\ast }\}.
\end{equation*}%
For $\lambda \in \Omega _{+}^{\ast }$ we have by Lemma \ref{lemmaSmallIm}
that $z_{xy}\neq 0$, whence $\lambda \in \Lambda $. By Lemma \ref%
{lemmaSmallIm} we have for all $xy\in E$ and $\lambda \in \Omega _{+}^{\ast
} $ 
\begin{equation}
\frac{\Im\rho _{xy}}{|\rho _{xy}|}\geq \frac{\frac{D_{xy}}{%
L_{xy}|\lambda |^{2}}-1}{|\lambda |+\frac{D_{xy}}{L_{xy}|\lambda |}+\frac{%
R_{xy}}{L_{xy}}}\Im\lambda \geq \frac{\frac{S_{D}^{\ast }}{|\lambda
|^{2}}-1}{|\lambda |+\frac{S_{D}}{|\lambda |}+S_{R}}\Im\lambda >0,
\label{SdSrSeq1}
\end{equation}%
By Theorem \ref{uniqDirpr} we conclude that $\mathcal{P}(\lambda )$ is a
holomorphic function in $\Omega _{+}^{\ast }$.

Using (\ref{SdSrSeq1}), we obtain by Theorem \ref{theoremInfReepsilon} that
for all $\lambda \in \Omega _{+}^{\ast }$ 
\begin{equation*}
\left\vert \mathcal{P}(\lambda )\right\vert \leq \left( \frac{|\lambda |+%
\frac{S_{D}}{|\lambda |}+S_{R}}{\left( \frac{S_{D}^{\ast }}{|\lambda |^{2}}%
-1\right) \Im\lambda }\right) ^{2}\sum_{x\sim a}\left\vert \rho
_{xa}^{(\lambda )}\right\vert .
\end{equation*}%
Next, we have by Lemma \ref{lemmaSmallIm} 
\begin{equation*}
|\rho _{xy}|\leq \frac{1}{\left( \frac{D_{xy}}{|\lambda |^{2}}-L_{xy}\right) 
\Im\lambda }=\frac{1}{L_{xy}\left( \frac{D_{xy}}{L_{xy}|\lambda |^{2}}%
-1\right) \Im\lambda }\leq \frac{1}{L_{xy}\left( \frac{S_{D}^{\ast }}{%
|\lambda |^{2}}-1\right) \Im\lambda },
\end{equation*}%
whence 
\begin{equation*}
\sum_{x\sim a}\left\vert \rho _{xa}\right\vert \leq \frac{%
\sum_{x\sim a}\left( L_{xa}\right) ^{-1}}{\left( \frac{S_{D}^{\ast }%
}{|\lambda |^{2}}-1\right) \Im\lambda }=\frac{C^{\prime }}{\left( 
\frac{S_{D}^{\ast }}{|\lambda |^{2}}-1\right) \Im\lambda }.
\end{equation*}%
It follows that 
\begin{equation*}
\left\vert \mathcal{P}(\lambda )\right\vert \leq \frac{C^{\prime }\left(
|\lambda |+\frac{S_{D}}{|\lambda |}+S_{R}\right) ^{2}}{\left( \frac{%
S_{D}^{\ast }}{|\lambda |^{2}}-1\right) ^{3}(\Im\lambda )^{3}}
\end{equation*}%
whence (\ref{RhoinOmegaStar}) follows.
\end{proof}

\begin{corollary}
Under the hypothesis of Theorem \ref{HolomorphicDomainSmallIm}, assume in
addition that $R_{xy}=0$ for all $xy\in E$. Then $\mathcal{P}(\lambda )$ is
holomorphic in the domain $\mathbb{C}\setminus J^{\ast }$ where 
\begin{equation*}
J^{\ast }=\left[ i\sqrt{S_{D}^{\ast }},i\sqrt{S_{D}}\right] \cup \left[ -i%
\sqrt{S_{D}},i\sqrt{S_{D}^{\ast }}\right] \cup \{0\}.
\end{equation*}
\end{corollary}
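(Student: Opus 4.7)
The plan is to piece together three holomorphy domains already established in the excerpt and observe that their union is exactly $\mathbb{C}\setminus J^{\ast}$. The key additional input is the symmetry $\mathcal{P}(-\lambda)=-\mathcal{P}(\lambda)$ from the earlier lemma (part $(b)$), which is available precisely because we now assume $R_{xy}=0$ for all edges.

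First I would observe that by Remark \ref{Rege0}, $\mathcal{P}$ is holomorphic in the right half-plane $\{\Re\lambda>0\}$. Using the symmetry $\mathcal{P}(-\lambda)=-\mathcal{P}(\lambda)$, holomorphy transfers to $\{\Re\lambda<0\}$ as well, so $\mathcal{P}$ is holomorphic in $\{\Re\lambda\neq 0\}$. Second, Theorem \ref{HolomorphicDomainSmallIm} gives holomorphy in $\Omega^{\ast}=\{\Im\lambda\neq 0,\ |\lambda|^{2}<S_{D}^{\ast}\}$, and Theorem \ref{HolomorphicDomainLargeIm} gives holomorphy in $\Omega=\{\Im\lambda\neq 0,\ |\lambda|^{2}>S_{D}\}$. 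Each of these three domains is contained in $\mathbb{C}\setminus J^{\ast}$, so their union is as well.

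It remains to verify the reverse inclusion, i.e.\ that every $\lambda\in\mathbb{C}\setminus J^{\ast}$ lies in one of the three domains. Let $\lambda\in\mathbb{C}\setminus J^{\ast}$. If $\Re\lambda\neq 0$, we are in the first domain. Otherwise $\lambda=iy$ for some real $y$; since $\lambda\neq 0$ (as $0\in J^{\ast}$), we have $y\neq 0$, and since $\lambda\notin J^{\ast}$ the real parameter $|y|$ satisfies either $|y|<\sqrt{S_{D}^{\ast}}$ or $|y|>\sqrt{S_{D}}$. In the first case $|\lambda|^{2}<S_{D}^{\ast}$ and $\lambda\in\Omega^{\ast}$; in the second $|\lambda|^{2}>S_{D}$ and $\lambda\in\Omega$. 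Thus $\mathbb{C}\setminus J^{\ast}$ is covered.

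I do not foresee a real obstacle here: the only substantive ingredient is the reflection symmetry that removes the imaginary axis as an obstruction in the right/left half-plane picture, and then the statement reduces to a purely set-theoretic case-split on the imaginary axis. Note that this analysis also explains the shape of $J^{\ast}$: it is the portion of the imaginary axis in the annulus $\sqrt{S_D^{\ast}}\le |\lambda|\le\sqrt{S_D}$ together with the origin (where the admittances themselves are singular).
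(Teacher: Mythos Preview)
Your proposal is correct and follows essentially the same approach as the paper: invoke the symmetry $\mathcal{P}(-\lambda)=-\mathcal{P}(\lambda)$ (available since $R_{xy}=0$) to extend holomorphy from $\{\Re\lambda>0\}$ to $\{\Re\lambda\neq 0\}$, combine this with the domains $\Omega$ and $\Omega^{\ast}$ from Theorems \ref{HolomorphicDomainLargeIm} and \ref{HolomorphicDomainSmallIm}, and observe that the union is $\mathbb{C}\setminus J^{\ast}$. Your write-up is in fact more explicit than the paper's in verifying the set-theoretic identity.
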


\begin{proof}
In this case we have the symmetry $\mathcal{P}(-\lambda )=-\mathcal{P}%
(\lambda )$. By Theorem \ref{uniqDirpr} (see Remark \ref{Rege0}), Theorems %
\ref{HolomorphicDomainLargeIm} and \ref{HolomorphicDomainSmallIm} $\mathcal{P%
}(\lambda )$ is holomorphic in the union 
\begin{equation*}
\{\Re\lambda \neq 0\}\cup \{(\Im\lambda )^{2}>S_{D}\}\cup \{0<(%
\Im\lambda )^{2}<S_{D}^{\ast }\}
\end{equation*}%
that coincides with $\mathbb{C}\setminus J^{\ast }$.
\end{proof}

\begin{figure}[H]
\begin{tikzpicture}[scale=0.7]
    \begin{scope}[thick,font=\scriptsize]


    \end{scope}
   \path [draw=none,fill=gray,semitransparent] (-5,-5) rectangle (5,5);
    \path [draw=none,fill=white] (0,0) circle (0.1);
   \path [draw=none,fill=white] (-0.05,-3) rectangle (0.05,-2);
   \path [draw=none,fill=white] (-0.05,3) rectangle (0.05,2);
    \draw (-3pt,2) -- (3pt,2)   node [right] {$i\sqrt{S^{\ast}_D}$};
    \draw (-3pt,3) -- (3pt,3)   node [right] {$i\sqrt{S_D}$};
    \draw (-3pt,-2) -- (3pt,-2)   node [ right] {$-i\sqrt{S^{\ast}_D}$};
    \draw (-3pt,-3) -- (3pt,-3)   node [right] {$-i\sqrt{S_D}$};
    \draw [->] (-5,0) -- (5,0) node [below left]  {$\Re \lambda$};
    \draw [->] (0,-5) -- (0,5) node [below right] {$\Im \lambda$};
    \draw [] (-5,0) -- (-2,0);
    \node [below right,gray] at (+3,+4) {$\mathbb{C}\setminus J^{\ast }$};
\end{tikzpicture}
\caption{The domain $\mathbb{C}\setminus J^{\ast }$.}
\label{figCminusJ}
\end{figure}
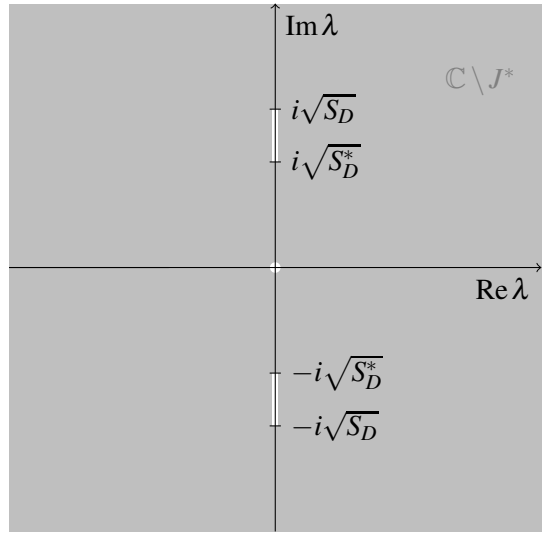

\section{Effective impedance of infinite networks}

\label{SecInfinite}

Let $(V,E)$ be an infinite locally finite connected graph equipped with the
weights $\rho _{xy}^{\left( \lambda \right) }$ as in Section \ref{SecFinite}%
. Fix a vertex $a\in V$ and a set of vertices $B\in V$ such that $%
a\not\in B$. Note that here the set $B$ can be empty (which physically
means that the ground will be at infinity).

Then the structure $\Gamma =(V,\rho ,a,B)$ is called an \emph{infinite
network}.

Let $\dist\left( x,y\right) $ be the graph distance on $V$, that is, the
minimal value of $n$ such that there exists a path $\left\{ x_{k}\right\}
_{k=0}^{n}$ connecting $x$ and $y$, that is,%
\begin{equation*}
x=x_{0}\sim x_{1}\sim ...\sim x_{n}=y.
\end{equation*}
Let us consider a sequence of finite graphs $(V_{n},E_{n})$, $n\in \mathbb{N}
$, where 
\begin{equation*}
V_{n}=\{x\in V\mid \dist(a,x)\leq n\}
\end{equation*}%
and $E_{n}$ consists of all the edges of $E$ with the endpoints in $V_{n}$.
We endow the finite graph $\left( V_{n},E_{n}\right) $ with the complex
weight $\rho _{n}=\left. \rho \right\vert _{E_{n}}.$

Consider the set 
\begin{equation*}
\partial V_{n}=\{x\in V\mid \dist(a,x)=n\},
\end{equation*}%
that will be regarded as the \emph{boundary} of the graph $(V_{n},E_{n})$.
Note that $V_{n}=\partial V_{n}\cup V_{n-1}$. Let us set%
\begin{equation*}
B_{n}=\left( B\cap V_{n}\right) \cup \partial V_{n}
\end{equation*}%
and consider the following sequence of finite networks 
\begin{equation*}
\Gamma _{n}=(V_{n},\rho _{n},a,B_{n}),\ \ n\in \mathbb{N}.
\end{equation*}

Let $\mathcal{P}_{n}(\lambda )$ be the effective admittance of $\Gamma _{n}$.

\begin{definition}
Define the \emph{effective admittance} of $\Gamma$ as 
\begin{equation*}
\mathcal{P}(\lambda)=\lim_{n\rightarrow\infty}\mathcal{P}_n(\lambda)
\end{equation*}
for those $\lambda \in \mathbb{C}\setminus \{0\}$ where the limit exists.
\end{definition}

\begin{theorem}
 \label{convergenceAtInfinity}

The following is true for any infinite network.
\begin{enumerate}

\item[$\left( a\right) $] The sequence $\{\mathcal{P}_{n}(\lambda )\}$
converges as $n\rightarrow \infty $ locally uniformly in the domain $\{\Re\lambda >0\}$.

\item[$\left( b\right) $] If $L_{xy}>0$ for all $xy\in E$ and 
\begin{equation}
S_{D}:=\sup_{xy\in E}\frac{D_{xy}}{L_{xy}}<\infty \;\text{and}%
\;S_{R}:=\sup_{xy\in E}\frac{R_{xy}}{L_{xy}}<\infty
\label{SdSrConvAtInfinity}
\end{equation}%
then $\{\mathcal{P}_{n}(\lambda )\}$ converges as $n\rightarrow \infty $
locally uniformly in the domain 
\begin{equation}
\Omega =\{\lambda \in \mathbb{C}\mid \Im\lambda \neq 0\ \text{and\ \ }%
|\lambda |^{2}>S_{D}\}.
\end{equation}

\item[$\left( c\right) $] If in addition to (\ref{SdSrConvAtInfinity}) also 
\begin{equation*}
S_{D}^{\ast }:=\inf_{xy\in E}\frac{D_{xy}}{L_{xy}}>0
\end{equation*}%
then $\{\mathcal{P}_{n}(\lambda )\}$ converges as $n\rightarrow \infty $
locally uniformly in the domain 
\begin{equation*}
\Omega ^{\ast }=\{\lambda \in \mathbb{C}\mid \Im\lambda \neq 0\ \ 
\text{and\ \ }|\lambda |^{2}<S_{D}^{\ast }\}.
\end{equation*}
\end{enumerate}

In all the cases, the limit 
\begin{equation*}
\mathcal{P}(\lambda)=\lim_{n\rightarrow\infty}\mathcal{P}_n(\lambda)
\end{equation*}
is a holomorphic function in the domains in question.
\end{theorem}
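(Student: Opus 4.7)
The plan is to combine the a priori bounds of Section~\ref{SecEstimates} with the classical theory of real resistance networks and Vitali's convergence theorem. The principal obstacle is that no direct monotonicity or positivity tool is available for complex $\lambda$, so convergence has to be funneled through the positive real axis and then propagated analytically.

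First I would verify that $\{\mathcal{P}_n\}$ is locally uniformly bounded on each of the three stated domains: on $\{\Re\lambda>0\}$ via Corollary~\ref{corollaryBoundPlambdaC}, on $\Omega$ via Theorem~\ref{HolomorphicDomainLargeIm}, and on $\Omega^{\ast}$ via Theorem~\ref{HolomorphicDomainSmallIm}. The key observation is that the constants $C$ and $C'$ appearing in those bounds are finite sums over the edges incident to the source $a$; since the graph is locally finite and every neighbor of $a$ already lies in $V_n$ for all $n\geq 1$, these constants are independent of $n$. Each $\mathcal{P}_n$ is moreover holomorphic on the respective domain by Theorem~\ref{uniqDirpr} and the holomorphy statements in Theorems~\ref{HolomorphicDomainLargeIm} and \ref{HolomorphicDomainSmallIm}.

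Next I would establish pointwise convergence of $\mathcal{P}_n(\lambda)$ on the positive real axis $\{\lambda>0\}$ by a classical Rayleigh-type monotonicity argument. For real $\lambda>0$ every admittance $\rho_{xy}^{(\lambda)}$ is a positive real, so $\Gamma_n$ reduces to an ordinary finite resistance network and $\mathcal{P}_n(\lambda)>0$. Moreover, the identity~\eqref{ccp} identifies $\mathcal{P}_n(\lambda)$ as the minimum of the Dirichlet energy over real test functions $u$ on $V_n$ with $u(a)=1$ and $u|_{B_n}\equiv 0$. Any such $u$ extends by zero to a function on $V_{n+1}$ that is admissible for $\Gamma_{n+1}$ and has the same Dirichlet energy: indeed, every edge joining $V_n$ to $V_{n+1}\setminus V_n$ has its $V_n$-endpoint in $\partial V_n\subset B_n$, where $u$ vanishes, so no new edges contribute. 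The variational principle then forces $\mathcal{P}_{n+1}(\lambda)\leq \mathcal{P}_n(\lambda)$, so this positive sequence converges for every real $\lambda>0$.

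Finally I would invoke Vitali's theorem on each connected component of the relevant domain. For (a) the domain $\{\Re\lambda>0\}$ is connected and contains the positive real axis as a subset with accumulation points, so Vitali upgrades the pointwise convergence of the previous step to locally uniform convergence. For (b) and (c) each connected component of $\Omega$ (respectively $\Omega^{\ast}$) meets $\{\Re\lambda>0\}$ in a non-empty open set on which part (a) already supplies convergence; Vitali then yields local uniform convergence throughout the component, and Weierstrass's theorem delivers the holomorphy of the limit.
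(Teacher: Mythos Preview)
Your proposal is correct and follows essentially the same route as the paper: local uniform bounds from Section~\ref{SecEstimates} (with the same observation that the constants depend only on the edges at $a$ and hence are independent of $n$), pointwise convergence on the positive real axis via monotonicity, and then analytic propagation to the full domain. The only cosmetic difference is that you invoke Vitali's theorem directly, whereas the paper spells out the equivalent Montel-plus-identity-theorem argument (precompactness, extraction of a convergent subsequence, and uniqueness of holomorphic limits determined on the positive reals); your variational justification of the monotonicity step is also more explicit than the paper's, which simply cites the classical theory of real resistance networks.
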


\begin{proof}
$(a)$ By Corollary \ref{corollaryBoundPlambdaC}, the sequence $\{\mathcal{P}%
_{n}(\lambda )\}$ is uniformly bounded in any domain 
\begin{equation*}
\{\Re\lambda \geq \epsilon ,\ |\lambda |\leq c\}
\end{equation*}%
with $0<\epsilon <c<\infty $. Hence, the sequence $\{\mathcal{P}_{n}(\lambda
)\}$ is precompact in such a domain and, hence, has a convergent
subsequence. By a diagonal process, we obtain a convergent subsequence $\{%
\mathcal{P}_{n_{k}}(\lambda )\}$ in the entire domain $\{\Re\lambda
>0\}$, and the limit is a holomorphic function in this domain. On the other
hand, for positive real $\lambda $ also all $\rho _{xy}^{\left( \lambda
\right) }$ are real and positive on the edges, and in this case the sequence 
$\{\mathcal{P}_{n}(\lambda )\}$ is known to be positive and decreasing (from
the theory of random walks on graphs, see e.g. \cite{Grimmett}, \cite{Soardi}%
). Hence, this sequence has a limit for all positive real $\lambda $. Since
every holomorphic function in $\{\Re\lambda >0\}$ is uniquely
determined by its values on positive reals, we obtain that $\lim \mathcal{P}%
_{n_{k}}(\lambda )$ is independent of the choice of a subsequence. Hence,
the entire sequence $\{\mathcal{P}_{n}(\lambda )\}$ converges as $%
n\rightarrow \infty $ in the domain $\{\Re\lambda >0\}$, and the limit
is a holomorphic function in this domain.

$(b)$ By Theorem \ref{HolomorphicDomainLargeIm} all the functions $\{\mathcal{P}%
_{n}(\lambda )\}$ are holomorphic in the domain 
\begin{equation*}
\Omega =\{\lambda \in \mathbb{C}\mid \Im\lambda \neq 0\ \ \text{and\ \ 
}|\lambda |^{2}>S_{D}\}
\end{equation*}%
and admit the estimate 
\begin{equation*}
\left\vert \mathcal{P}_{n}(\lambda )\right\vert \leq \frac{C(2|\lambda
|+S_{R})^{2}|\lambda |^{6}}{(|\lambda |^{2}-S_{D})^{3}|\Im\lambda |^{3}%
}.
\end{equation*}%
Hence, the sequence $\{\mathcal{P}_{n}(\lambda )\}$ is locally uniformly
bounded in $\Omega $ and, hence, is precompact. All the limits of convergent
subsequences of $\{\mathcal{P}_{n}(\lambda )\}$ coincide by $\left( a\right) 
$ in the domain 
\begin{equation*}
\Omega \cap \{\Re\lambda >0\}
\end{equation*}%
which implies that they coincide also in $\Omega $. Hence, $\{\mathcal{P}%
_{n}(\lambda )\}$ converges in $\Omega $ to a holomorphic function.

$(c)$ By Theorem \ref{HolomorphicDomainSmallIm} all functions $\{\mathcal{P}%
_{n}(\lambda )\}$ are holomorphic 
\begin{equation*}
\Omega ^{\ast }=\{\lambda \in \mathbb{C}\mid \Im\lambda \neq 0\ \text{%
and\ }|\lambda |^{2}<S_{D}^{\ast }\}
\end{equation*}%
and admit the estimate 
\begin{equation*}
\left\vert \mathcal{P}_{n}(\lambda )\right\vert \leq \frac{C^{\prime
}(|\lambda |^{2}+S_{R}|\lambda |+S_{D})^{2}|\lambda |^{4}}{(S_{D}^{\ast
}-|\lambda |^{2})^{3}|\Im\lambda |^{3}}.
\end{equation*}%
Hence, the sequence $\{\mathcal{P}_{n}(\lambda )\}$ is locally uniformly
bounded in $\Omega ^{\ast }$ and, hence, is precompact. All the limits of
convergent subsequences of $\{\mathcal{P}_{n}(\lambda )\}$ coincide in the
domain 
\begin{equation*}
\Omega ^{\ast }\cap \{\Re\lambda >0\}
\end{equation*}%
which implies that they coincide also in $\Omega ^{\ast }$. Hence, $\{%
\mathcal{P}_{n}(\lambda )\}$ converges in $\Omega ^{\ast }$ to a holomorphic
function.
\end{proof}

\begin{corollary}
\label{corollaryInfinity} Assume that $R_{xy}=0$ for all $xy\in E.$ Then $%
\mathcal{P}(\lambda )=\lim_{n\rightarrow \infty }\mathcal{P}_{n}(\lambda )$
is well-defined and holomorphic in the domain $\mathbb{C}\setminus J^{\ast }$%
, where 
\begin{equation*}
J^{\ast }=\left[ i\sqrt{S_{D}^{\ast }},i\sqrt{S_{D}}\right] \cup \left[ -i%
\sqrt{S_{D}},i\sqrt{S_{D}^{\ast }}\right] \cup \{0\}.
\end{equation*}
\end{corollary}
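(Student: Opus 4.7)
The corollary is essentially a packaging result: all the analytic heavy lifting lives in Theorem \ref{convergenceAtInfinity} and in the $R=0$ symmetry from part $(b)$ of the first lemma. My plan is to cover $\mathbb{C}\setminus J^{\ast}$ by four overlapping open sets on each of which local uniform convergence of $\{\mathcal{P}_n(\lambda)\}$ is already available, and then patch the limits by the identity principle.

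Concretely, I would first apply Theorem \ref{convergenceAtInfinity}$(a)$, $(b)$, $(c)$ directly (noting that the hypothesis $L_{xy}>0$ is implicit in the definitions of $S_D$ and $S_D^{\ast}$) to get locally uniform convergence of $\{\mathcal{P}_n(\lambda)\}$ to holomorphic limits on the three domains
\begin{equation*}
U_+ := \{\Re\lambda>0\},\qquad
\Omega = \{\Im\lambda\neq 0,\ |\lambda|^2>S_D\},\qquad
\Omega^{\ast}=\{\Im\lambda\neq 0,\ |\lambda|^2<S_D^{\ast}\}.
\end{equation*}
Next, since $R_{xy}=0$ for all edges, the symmetry $\mathcal{P}_n(-\lambda)=-\mathcal{P}_n(\lambda)$ (the first lemma, part $(b)$, applied edge-by-edge to each finite approximation $\Gamma_n$) transports the convergence on $U_+$ to the mirror domain
\begin{equation*}
U_- := \{\Re\lambda<0\},
\end{equation*}
where the limit is again holomorphic. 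This gives me four open sets, on each of which $\mathcal{P}_n$ converges locally uniformly to a holomorphic function.

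The second step is the point-set check that
\begin{equation*}
U_+ \cup U_- \cup \Omega \cup \Omega^{\ast} = \mathbb{C}\setminus J^{\ast}.
\end{equation*}
The union $U_+\cup U_-$ is $\{\Re\lambda\neq 0\}$, so the only points of $\mathbb{C}$ one can miss lie on the imaginary axis. A point $\lambda = it$ with $t\in\mathbb{R}$ belongs to $\Omega$ iff $|t|>\sqrt{S_D}$, belongs to $\Omega^{\ast}$ iff $0<|t|<\sqrt{S_D^{\ast}}$, and is omitted otherwise; the omitted set is exactly $J^{\ast}$.

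Finally I patch. On each pairwise intersection of two of the four domains (all of which are non-empty and open, hence contain accumulation points) the two candidate limit functions are holomorphic and agree on a set with an accumulation point — in fact on the open intersection itself, because both are obtained as $\lim\mathcal{P}_n$ in the same local uniform sense — so by the identity theorem the local limits glue into a single holomorphic function on the union $\mathbb{C}\setminus J^{\ast}$. I expect no real obstacle here; the one spot where I would be careful is the consistency between $U_-$ and $\Omega$ (or $\Omega^{\ast}$): on $U_-\cap\Omega$ the limit from $U_-$ is defined as $-\lim\mathcal{P}_n(-\lambda)$ while the $\Omega$-limit is $\lim\mathcal{P}_n(\lambda)$, and equality of these two holomorphic functions on the open overlap follows from applying the symmetry $\mathcal{P}_n(-\lambda)=-\mathcal{P}_n(\lambda)$ inside the limit, which is legitimate because both sides converge locally uniformly there. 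This completes the proof.
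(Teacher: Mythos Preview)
Your proof is correct and follows essentially the same route as the paper: invoke Theorem \ref{convergenceAtInfinity}(a)--(c) together with the $R=0$ symmetry $\mathcal{P}_n(-\lambda)=-\mathcal{P}_n(\lambda)$ to obtain locally uniform convergence on $\{\Re\lambda\neq 0\}\cup\Omega\cup\Omega^{\ast}=\mathbb{C}\setminus J^{\ast}$. The identity-theorem patching you add is harmless but unnecessary, since on every overlap the candidate limits are literally the same pointwise limit $\lim_n \mathcal{P}_n(\lambda)$ and hence agree automatically.
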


\begin{proof}
Note that we assume neither $S_{D}<\infty $ nor $S_{D}^{\ast }>0.$ By the
symmetry $\mathcal{P}(-\lambda )=-\mathcal{P}(\lambda )$ an by Theorem \ref%
{convergenceAtInfinity}, the sequence $\{\mathcal{P}_{n}(\lambda )\}$
converges locally uniformly in the union 
\begin{equation*}
\{\Re\lambda \neq 0\}\cup \{(\Im\lambda )^{2}>S_{D}\}\cup \{0<(%
\Im\lambda )^{2}<S_{D}^{\ast }\}
\end{equation*}%
that coincides with $\mathbb{C}\setminus J^{\ast }$.
\end{proof}

The next statement is a simplified version of Corollary \ref%
{corollaryInfinity}.

\begin{corollary}
\label{corinf}Assume that $R_{xy}=0$ for all $xy\in E$ and set 
\begin{equation*}
S:=\sup_{xy\in E}\frac{1}{C_{xy}L_{xy}}.
\end{equation*}%
Then $\mathcal{P}(\lambda )=\lim_{n\rightarrow \infty }\mathcal{P}%
_{n}(\lambda )$ is well-defined and holomorphic in the domain 
\begin{equation*}
\mathbb{C}\setminus \left[ -i\sqrt{S},i\sqrt{S}\right] .
\end{equation*}
\end{corollary}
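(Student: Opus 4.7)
The plan is to deduce this statement directly from Corollary~\ref{corollaryInfinity}. The key observation is that, since $D_{xy}=1/C_{xy}$, we have $D_{xy}/L_{xy}=1/(C_{xy}L_{xy})$, so in the notation of Corollary~\ref{corollaryInfinity} we have $S_D=S$ and $S_D^{\ast}\le S$. Every interval entering the definition of $J^{\ast}$ lies on the imaginary axis with modulus at most $\sqrt{S_D}=\sqrt{S}$, hence
\begin{equation*}
J^{\ast}\subseteq\left[-i\sqrt{S},\,i\sqrt{S}\right],
\end{equation*}
which yields the inclusion $\mathbb{C}\setminus[-i\sqrt{S},\,i\sqrt{S}]\subseteq\mathbb{C}\setminus J^{\ast}$. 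Applying Corollary~\ref{corollaryInfinity} then immediately gives that $\lim_{n\to\infty}\mathcal{P}_n(\lambda)$ exists and defines a holomorphic function on the smaller target domain.

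Equivalently, one can reassemble the proof directly from Theorem~\ref{convergenceAtInfinity}. Part~(a) yields locally uniform convergence to a holomorphic limit on $\{\Re\lambda>0\}$; the resistance-free symmetry $\mathcal{P}_n(-\lambda)=-\mathcal{P}_n(\lambda)$ extends this to $\{\Re\lambda<0\}$; and, when $S<\infty$, part~(b) covers $\Omega=\{\Im\lambda\neq 0,\;|\lambda|^2>S\}$. The union of these three regions is exactly $\mathbb{C}\setminus[-i\sqrt{S},\,i\sqrt{S}]$, and the three holomorphic limits agree on overlaps by the identity principle, yielding a single well-defined holomorphic function on the whole target domain.

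There is no substantive obstacle: this corollary is a presentational simplification, replacing the intricate set $J^{\ast}$ (which depends on both $S_D^{\ast}$ and $S_D$) by the cleaner single-parameter segment $[-i\sqrt{S},\,i\sqrt{S}]$. The only point worth a moment's attention is the degenerate case $S=\infty$, where $[-i\sqrt{S},\,i\sqrt{S}]$ should be read as $i\mathbb{R}$; in that case part~(b) of Theorem~\ref{convergenceAtInfinity} does not apply, but part~(a) together with the sign-flip symmetry already covers $\{\Re\lambda\neq 0\}=\mathbb{C}\setminus i\mathbb{R}$, which is what the statement reduces to.
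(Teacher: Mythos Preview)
Your proposal is correct and matches the paper's approach: the paper does not give an explicit proof of this corollary at all, introducing it simply as ``a simplified version of Corollary~\ref{corollaryInfinity},'' which is precisely your first route via the containment $J^{\ast}\subseteq[-i\sqrt{S},i\sqrt{S}]$. Your second paragraph (reassembling directly from Theorem~\ref{convergenceAtInfinity}) and your treatment of the degenerate case $S=\infty$ go slightly beyond what the paper makes explicit, but both are sound and in the same spirit.
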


\section{Examples}

\label{SecEx}

\begin{example}
\label{exLine} Consider the infinite graph $(V,E)$, where 
\begin{equation*}
V=\{0,1,2,\dots \}
\end{equation*}%
and $E$ is given by 
\begin{equation*}
0\sim 1\sim 2\sim \dots \sim n\sim (n+1)\sim \dots
\end{equation*}%
Define the impedance of the edge $k\sim (k+1)$ by 
\begin{equation*}
z_{k(k+1)}=L_{k}\lambda +\frac{D_{k}}{\lambda },
\end{equation*}%
where $L_{k}\geq D_{k}>0$ (and $R_{k}=0$) (see Fig. \ref{figChain}).

\begin{figure}[H]
\begin{circuitikz} 
  \draw
 (0,0) node[anchor=south]{$0$}
 (0,0) to[/tikz/circuitikz/bipoles/length=30pt, L,  l=${L_0}$, o-] (1.5,0)
 (1.5,0) to[/tikz/circuitikz/bipoles/length=20pt, C,  l=$D_0$, -*] (3,0)
 (3,0) node[anchor=north]{$1$}
 (3,0) to[/tikz/circuitikz/bipoles/length=30pt, L,  l=${L_1}$, *-] (4.5,0)
 (4.5,0) to[/tikz/circuitikz/bipoles/length=20pt, C,  l=$D_1$, -*] (6,0)
 (6,0) node[anchor=north]{$2$}
 (8,0) node[anchor=north]{$(n-1)$}
 (8,0) to[/tikz/circuitikz/bipoles/length=30pt, L,  l=${L_{n-1}}$, *-] (9.5,0)
 (9.5,0) to[/tikz/circuitikz/bipoles/length=20pt, C,  l=$D_{n-1}$, -*] (11,0)
 (11,0) node[anchor=north]{$n$};

  \draw[dashed] 
  (6,0) to[short, *-*] (8,0) 
  (11,0) to[short, *-] (13,0);
\end{circuitikz}
\caption{Chain network}
\label{figChain}
\end{figure}
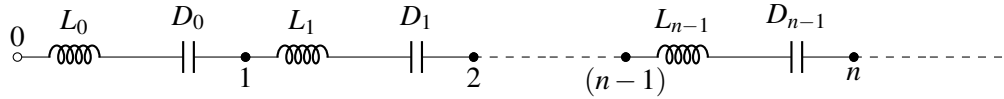

Set $a=0$ and $B=\emptyset $. Then we have $V_{n}=\left\{
0,...,n\right\} $ and $B_{n}=\{n\}$. It follows that 
\begin{equation*}
\mathcal{P}_{n}(\lambda )=\frac{1}{\sum_{k=0}^{n-1}\left( L_{k}\lambda +%
\frac{D_{k}}{\lambda }\right) }=\frac{1}{l_{n}\lambda +\frac{d_{n}}{\lambda }%
},
\end{equation*}%
where 
\begin{equation*}
l_{n}=\sum_{k=0}^{n-1}L_{k}\;\ \text{and}\;d_{n}=\sum_{k=0}^{n-1}D_{k}.
\end{equation*}%
Assume further that 
\begin{equation*}
\sum_{k=0}^{\infty }L_{k}=\sum_{k=0}^{\infty }D_{k}=\infty
\end{equation*}%
that is 
\begin{equation*}
\lim_{n\rightarrow \infty }l_{n}=\lim_{n\rightarrow \infty }d_{n}=+\infty .
\end{equation*}%
Then for any $\lambda $ with $\Re\lambda \neq 0$ we obtain 
\begin{equation*}
\Re\left( l_{n}\lambda +\frac{d_{n}}{\lambda }\right) \rightarrow
\infty
\end{equation*}%
whence 
\begin{equation*}
\mathcal{P}(\lambda )=\lim_{n\rightarrow \infty }\mathcal{P}_{n}(\lambda )=0.
\end{equation*}%
For $\lambda =i\omega $ with real $\omega $ we have 
\begin{equation*}
\mathcal{P}_{n}(i\omega )=-\frac{i}{l_{n}\omega -\frac{d_{n}}{\omega }}.
\end{equation*}%
Assume in addition that 
\begin{equation*}
\sum_{k=0}^{\infty }(L_{k}-D_{k})=:c\in (0,\infty )
\end{equation*}%
that is 
\begin{equation*}
\lim_{n\rightarrow \infty }(l_{n}-d_{n})=c\in (0,\infty ).
\end{equation*}%
Then for $\omega =1$ we have 
\begin{equation*}
\mathcal{P}_{n}(i)=-\frac{i}{l_{n}-{d_{n}}},
\end{equation*}%
whence 
\begin{equation*}
\mathcal{P}(i)=\lim_{n\rightarrow \infty }\mathcal{P}_{n}(i)=-\frac{i}{c}.
\end{equation*}%
It follows that also 
\begin{equation*}
\mathcal{P}(-i)=\frac{i}{c}.
\end{equation*}%
For $\omega \neq \pm 1$ we have 
\begin{equation*}
l_{n}\omega -\frac{d_{n}}{\omega }=(l_{n}-d_{n})\omega +d_{n}\left( \omega -%
\frac{1}{\omega }\right) \rightarrow \pm \infty
\end{equation*}%
whence it follows that 
\begin{equation*}
\mathcal{P}(i\omega )=\lim_{n\rightarrow \infty }\mathcal{P}_{n}(i\omega )=0.
\end{equation*}%
Hence, for any $\lambda \in \mathbb{C}\setminus \{0\}$ we have 
\begin{equation*}
\mathcal{P}(\lambda )=%
\begin{cases}
-\frac{\lambda }{c},\ \text{if}\ \lambda =\pm i \\ 
0,\ \text{otherwise.}%
\end{cases}%
\end{equation*}%
In particular, $\mathcal{P}(\lambda )$ is holomorphic in $\mathbb{C}%
\setminus \{\pm i\}$ but is discontinuous at $\lambda =\pm i$.

On the other hand we have 
\begin{equation*}
S_{D}=\sup_{n}\frac{D_{n}}{L_{n}}\ \ \text{and}\ \ S_{D}^{\ast }=\inf_{n}\frac{%
D_{n}}{L_{n}}.
\end{equation*}%
Since $c<\infty $ then necessarily $S_{D}=1$. Since $c>0$ then $S_{D}^{\ast
}<1$.

Wee see that the points $\lambda =\pm i$ (where $\mathcal{P}$ looses
continuity) lie in the set 
\begin{equation*}
J^{\ast }=\left[ -i\sqrt{S_{D}},-i\sqrt{S_D^{\ast }}\right] \cup \left[ i\sqrt{%
S_D^{\ast }},i\sqrt{S_{D}}\right] \cup \{0\}
\end{equation*}%
that matches Corollary \ref{corollaryInfinity}. For example, choose 
\begin{equation*}
L_{n}=1\;\text{and}\;D_{n}=1-\epsilon 2^{-n},
\end{equation*}%
where $\epsilon >0$. Then $S_{D}=1$ and $S_{D}^{\ast }=1-\epsilon $, so that
the interval 
$$\left[ i\sqrt{S_D^{\ast }},i\sqrt{S}_{D}\right]=\left[i\sqrt{1-\varepsilon},i\right], $$ 
containing $\lambda=i,$ can have an arbitrary small length.
\end{example}

\begin{example}
\label{exLadder}

Consider the infinite graph $(V,E)$, where 
\begin{equation*}
V=\{0,1,2,3,4,\dots \}
\end{equation*}%
and $E$ is given by $(2k-2)\sim 2k$ and $(2k-1)\sim 2k$ for $k=\overline{%
1,\infty }$ . Let us make this graph into a network as on  Fig. \ref{fig}. 

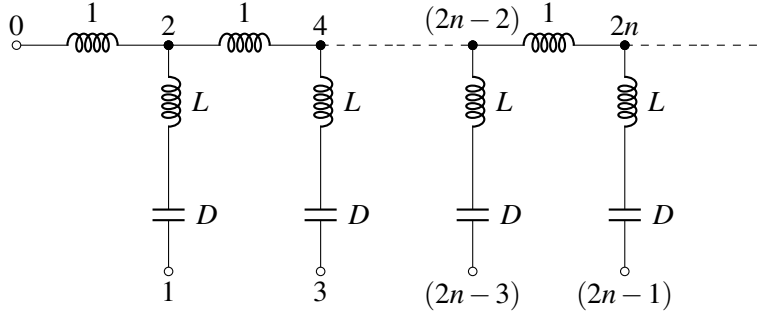
\begin{figure}[H]
\begin{circuitikz} 
  \draw
 (0,0) node[anchor=south]{$0$}
  (0,0) to[/tikz/circuitikz/bipoles/length=30pt,  L, l=$1$, o-*] (2,0)
  (2,0) node[anchor=south]{$2$}
  (2,0) to[/tikz/circuitikz/bipoles/length=30pt, L,  l=$1$, *-*] (4,0)
  (4,0) node[anchor=south]{$4$}
  (2,0) to[/tikz/circuitikz/bipoles/length=30pt, L,  l=$L$, *-] (2,-1.5)
  (2,-1.5) to[/tikz/circuitikz/bipoles/length=20pt,C,  l=$D$,-o] (2,-3)
  (2,-3) node[anchor=north]{$1$}
  (4,0) to[/tikz/circuitikz/bipoles/length=30pt, L,  l=$L$, *-] (4,-1.5)
  (4,-1.5) to[/tikz/circuitikz/bipoles/length=20pt,C,  l=$D$,-o] (4,-3)
  (4,-3) node[anchor=north]{$3$}

  (6,0) node[anchor=south]{$(2n-2)$}
  (6,0) to[/tikz/circuitikz/bipoles/length=30pt, L,  l=$1$, *-*] (8,0)
  (8,0) node[anchor=south]{$2n$}
   (6,0) to[/tikz/circuitikz/bipoles/length=30pt, L,  l=$L$, *-] (6,-1.5)
  (6,-1.5) to[/tikz/circuitikz/bipoles/length=20pt,C,  l=$D$,-o] (6,-3)
  (6,-3) node[anchor=north]{$(2n-3)$}
   (8,0) to[/tikz/circuitikz/bipoles/length=30pt, L,  l=$L$, *-] (8,-1.5)
  (8,-1.5) to[/tikz/circuitikz/bipoles/length=20pt,C,  l=$D$,-o] (8,-3)
  (8,-3) node[anchor=north]{$(2n-1)$};

  \draw[dashed] 
  (4,0) to[short, *-*] (6,0) 
  (8,0) to[short, *-] (10,0);
\end{circuitikz}
\caption{Modified ladder network}
\label{fig}
\end{figure}

That is, let the impedance of the edges $(2k-2)\sim 2k$ be $\lambda $ and impedance
of the edges $2k-1\sim 2k$ be $L\lambda +\frac{D}{\lambda }$, where $L>0$
and $D>0$. Set also $a=0$, while 
\begin{equation*}
B=\{1,3,\dots \}.
\end{equation*}%
This network is an $\alpha\beta$-network from \cite{Muranova2} and it is similar to Feynman's ladder network (see \cite{Feynman}),
but we add coils to the \textquotedblleft vertical\textquotedblright\ edges
and ground at infinity. Clearly, we have 
\begin{equation*}
V_{n}=\{0,1,\dots ,2n\}\setminus \{2n-1\}\ \ \text{and} \ \ B_{n}=\{1,3,\dots ,2n-3\}\cup \{2n\}.
\end{equation*}%
The Dirichlet problem (\ref{Dirpr}) for the finite network $\Gamma_n$ is as follows:
\begin{equation}\label{dirprF}
 \begin{cases}
 v(2k-2)+\mu v(2k-1)+ v(2k+2)-(2+\mu)v(2k)=0,\ \ k=\overline{1,n-1},
\\
v(0)=1,
\\
v(2k-1)= 0,\ \ k=\overline{1,n-1}, 
\\ 
v(2n)=0,
 \end{cases}
\end{equation}
where $\mu=\frac{\lambda^2}{L\lambda^2+D}$.

Substituting the equations from the third line of (\ref{dirprF}) to the first line and denoting $v_k=v(2k)$, we obtain the following recurrence relation for $v_k$:
\begin{equation}\label{recuk}
v_{k+1}-\left(2+\mu\right)v_k+v_{k-1}=0.
\end{equation}
The characteristic polynomial of (\ref{recuk}) is
\begin{equation}\label{psi}
\psi^2-\left(2+\mu\right)\psi+1=0.
\end{equation}
By the definition of a network $\mu\ne 0$. If $\mu\ne -4$, then the equation (\ref{psi}) has two different complex roots $\psi_1, \psi_2$ and its solution is
\begin{equation}\label{ukeq}
v_k=c_1\psi_1^k+c_2\psi_2^k,
\end{equation}
where $c_1,c_2\in \mathbb{C}$ are arbitrary constants.

We use the second and fours equations of (\ref{dirprF}) as boundary conditions for this recurrence equation.
Substituting (\ref{ukeq}) in the boundary conditions we obtain the following equations for the constants:
\begin{equation*}
\begin{cases}
c_1+c_2=1,\\
c_1\psi_1^{n}+c_2\psi_2^{n}=0.
\end{cases}
\end{equation*}
Therefore,
\begin{equation*}
\begin{cases}
c_1=\frac{1}{1-\psi_1^{2n}}=\frac{-\psi_2^{2n}}{1-\psi_2^{2n}},\\
c_2=\frac{1}{1-\psi_2^{2n}}=\frac{-\psi_1^{2n}}{1-\psi_1^{2n}},
\end{cases}
\end{equation*}
since $\psi_1\psi_2=1$ by (\ref{psi}).

Now we can calculate the effective admittance of $\Gamma_n$:
\begin{align*}
\mathcal{P}_{n}(\lambda )=&{\frac{1}{\lambda}\left(1-v(2)\right)}=\frac{1}{\lambda}\left(1-c_1 \psi_1-c_2 \psi_2\right)\\
=&\frac{\left(\psi_1^{2n-1}+1\right)\left(\psi_1-1\right)}{\lambda\left(\psi_1^{2n}-1\right)}=\frac{\left(\psi_2^{2n-1}+1\right)\left(\psi_2-1\right)}{\lambda\left(\psi_2^{2n}-1\right)}.
\end{align*}%

Without loss of generality we can assume, that $|\psi_1|\le |\psi_2|$. Then,
since $\psi_1\psi_2=1$ by (\ref{psi}), we have either $|\psi_1|<1<|\psi_2|$ or $|\psi_1|=|\psi_2|=1$.

In the case $|\psi_1|<1<|\psi_2|$ we obtain
\begin{equation*}
\mathcal{P}(\lambda )=\lim_{n\rightarrow \infty }\mathcal{P}_{n}(\lambda )=\frac{1-\psi_1}{\lambda}.
\end{equation*}%

In the case $|\psi_1|=|\psi_2|=1$ the sequence $\{\mathcal{P}_{n}(\lambda )\}$ has no limit.

Let us now consider the case $\mu =-4$. Then the solution of the recurrence relation (\ref{recuk}) is 
\begin{equation*}
v_k=c_1(-1)^k+c_2k(-1)^k,
\end{equation*}
where $c_1,c_2\in \mathbb{C}$ are arbitrary constants. And using boundary conditions, we obtain
\begin{equation*}
\begin{cases}
c_1=1\\
c_2=-\frac{1}{n},
\end{cases}
\end{equation*}

\begin{equation*}
\mathcal{P}_{n}(\lambda )={\frac{1}{\lambda}\left(1-v(2)\right)}=\frac{2n-1}{\lambda n},
\end{equation*}%

and 

\begin{equation*}
\mathcal{P}(\lambda )=\lim_{n\rightarrow \infty }\mathcal{P}_{n}(\lambda )=\frac{2}{\lambda}.
\end{equation*}

Therefore, for the infinite network we have 
\begin{equation*}
\mathcal{P}(\lambda )=%
\begin{cases}
\frac{1-\psi_1}{\lambda}, \mbox { if }|\psi_1|<1<|\psi_2|,\\
\frac{2}{\lambda },\ \ \ \ \ \ \mbox{if } \psi_1=\psi_2=-1,\\
\mbox {not defined otherwise}. 
\end{cases}
\end{equation*}

Now we will reformulate the above identity in terms of $\lambda$.

\begin{claim} \label{modulus_x_1}
Let $\mu\ne -4$ and $\mu \ne 0$. Then the condition $|\psi_{1}|=|\psi_{2}|=1$ occurs if and only if $%
\mu \in (-4,0)$.
\end{claim}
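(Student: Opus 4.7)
The plan is to exploit the two Vieta relations that equation (\ref{psi}) provides, namely
\[
\psi_1\psi_2 = 1,\qquad \psi_1+\psi_2 = 2+\mu,
\]
and then characterize when the roots of a real-coefficient quadratic with product $1$ can both lie on the unit circle.

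For the ``if'' direction, I would fix $\mu \in (-4,0)$ and simply compute the discriminant of (\ref{psi}):
\[
\Delta \;=\; (2+\mu)^2 - 4 \;=\; \mu(\mu+4).
\]
For $\mu \in (-4,0)$ this is strictly negative, so $\psi_1,\psi_2$ are non-real; since (\ref{psi}) has real coefficients they are complex conjugates of each other, which combined with $\psi_1\psi_2=1$ forces $|\psi_1|^2 = \psi_1\overline{\psi_1} = \psi_1\psi_2 = 1$, and similarly $|\psi_2|=1$.

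For the ``only if'' direction, assume $|\psi_1|=|\psi_2|=1$. From $\psi_1\psi_2=1$ it follows that $\psi_2 = 1/\psi_1 = \overline{\psi_1}$ (using $|\psi_1|=1$), hence
\[
2+\mu \;=\; \psi_1 + \psi_2 \;=\; 2\Re\psi_1 \;\in\; [-2,2],
\]
which shows in particular that $\mu$ is real and $\mu \in [-4,0]$. The endpoints must then be excluded: $\mu=0$ would force $\psi_1+\psi_2=2$ together with $\psi_1\psi_2=1$, giving $\psi_1=\psi_2=1$, and $\mu=-4$ would force $\psi_1=\psi_2=-1$; both cases are ruled out by the hypothesis $\mu\neq 0,-4$. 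Therefore $\mu\in(-4,0)$.

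There is no serious obstacle here — the whole argument is just a direct application of Vieta plus the observation that on the unit circle $\overline{\psi}=1/\psi$. The only point worth being careful about is using the hypotheses $\mu\ne 0,-4$ precisely to exclude the boundary of the interval and to guarantee that the roots are distinct, so that the representation (\ref{ukeq}) used later remains valid.
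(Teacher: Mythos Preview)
Your proof is correct and follows essentially the same approach as the paper: both directions rely on Vieta's relations $\psi_1\psi_2=1$, $\psi_1+\psi_2=2+\mu$ together with the unit-circle identity $\overline{\psi}=1/\psi$. Your presentation is in fact slightly more streamlined than the paper's --- in the ``if'' direction the paper computes $|\psi_{1,2}|^2$ explicitly from the quadratic formula, whereas you obtain it directly from $\psi_1\overline{\psi_1}=\psi_1\psi_2=1$; and in the ``only if'' direction the paper argues first that the roots are non-real and then that the discriminant is non-positive, while you go straight to $2+\mu=2\Re\psi_1\in[-2,2]$.
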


\begin{proof}
\textquotedblleft $\Rightarrow $\textquotedblright\ Let $|\psi_{1}|=1$. Then $%
|\psi_{2}|=1$. Since $\mu \neq 0$
and $\mu \neq -4$, it follows from (\ref{psi}) and $%
|\psi_{1}|=|\psi_{2}|=1$ that $\psi_{1},\psi_{2}\not\in \mathbb{R}$. Therefore, $\psi_{2}=%
\overline{\psi_{1}}$, since $\psi_{1}\overline{\psi_{1}}=1$. Moreover, by (\ref%
{psi}) we have $2+\mu =\psi_{1}+\psi_{2}\in \mathbb{R}$. Also $%
\psi_{1},\psi_{2}\not\in \mathbb{R}$ means that the determinant of (\ref%
{psi}) 
\begin{equation}
\left( 2+\mu \right) ^{2}-4=4\mu +\mu ^{2}
\end{equation}%
is not positive, i.e. $\mu \not\in (-\infty ,-4)\cup (0,\infty )$.
Therefore, 
\begin{equation*}
\mu \in \mathbb{R}\setminus ((-\infty ,-4)\cup (0,\infty ))
\end{equation*}
which was to be proved.

\textquotedblleft $\Leftarrow $\textquotedblright\ Let $\mu \in (-4,0)$.
Then the determinant of (\ref{psi}) is negative and 
\begin{equation*}
|\psi_{1,2}|^{2}=\left\vert 1+\frac{\mu }{2}\pm i\sqrt{-\mu -\left( \frac{%
\mu }{2}\right) ^{2}}\right\vert ^{2}=\left( 1+\frac{\mu }{2}\right)
^{2}-\mu -\left( \frac{\mu }{2}\right) ^{2}=1.
\end{equation*}
\end{proof}

Since $\mu=-4$ at the points $\lambda =\pm i\sqrt{\frac{D}{L+1/4}}$, we have 

\begin{equation}\label{Pla}
\mathcal{P}(\lambda )=%
\begin{cases}
\frac{1-\psi_1}{\lambda }, \  \mbox{if}\ \lambda
\in \mathbb{C}\setminus \left[ -i\sqrt{\frac{D}{L+1/4}},i\sqrt{\frac{D}{L+1/4%
}}\right] \\ 
\frac{2}{\lambda },\ \ \ \ \ \ \mbox{if }\lambda =\pm i\sqrt{\frac{D}{L+1/4}} \\ 
\mbox{not defined, if }\lambda \in \left( -i\sqrt{\frac{D}{L+1/4}},i\sqrt{%
\frac{D}{L+1/4}}\right) ,%
\end{cases}
\end{equation}%
where $\psi_1$ is the root if the equation \eqref{psi} with $|\psi|<1$.
Clearly, this function is continuous at the points $\lambda =\pm i\sqrt{%
\frac{D}{L+1/4}}$. Indeed, 
\begin{equation*}
\psi\rightarrow -1,
\end{equation*}%
when $\lambda \rightarrow \pm i\sqrt{\frac{D}{L+1/4}}$, since roots of the
quadratic equation are continuous functions on coefficients. Hence, the
continuity of $\mathcal{P}(\lambda)$ at given points follows. Therefore, $\mathcal{P}\left( \lambda \right)$ is well-defined and continuous in the domain
\begin{equation*} 
\mathbb{C}\setminus \left( -i\sqrt{\frac{D}{L+1/4}},i\sqrt{\frac{D}{L+1/4%
}}\right).
\end{equation*} 
In particular, the domain of holomorphicity of $\mathcal{P}\left( \lambda \right)$ is 
\begin{equation} \mathbb{C}\setminus \left[ -i\sqrt{\frac{D}{L+1/4}},i\sqrt{\frac{D}{L+1/4%
}}\right].\label{DL4}
\end{equation}

On the other hand we have $S_{D}^{\ast }=0$, $S_{D}=\frac{D}{L}$, therefore, 
the Corollary \ref{corollaryInfinity} states, that $\mathcal{P}\left( \lambda \right)$ is holomorphic in the domain
\begin{equation} 
\label{DL}
\mathbb{C}\setminus \left[ -i\sqrt{\frac{D}{L}},i\sqrt{\frac{D}{L}}\right].
\end{equation} 
Comparison of the intervals (\ref{DL4}) and (\ref{DL}) shows the sharpness of Corollary \ref{corollaryInfinity}.  
\end{example}

\section*{Acknowledgement}
The author thanks her scientific advisor, Professor Alexander Grigor'yan, for helpful comments related to this work.

\end{document}